\documentclass[10pt]{article}
\usepackage{latexsym,amsfonts,amsthm,amsmath,amssymb,cases,authblk}
\usepackage{cite}
\usepackage{enumerate}
\usepackage{graphicx}
\usepackage{color}
\DeclareMathOperator{\esssup}{ess\,sup}
\newtheorem{thm}{Theorem}[section]

\newtheorem{lemma}{Lemma}[section]
\newtheorem{prob}{Problem}[section]
\newtheorem{subprob}{Subroblem}[section]
\newtheorem{assumption}{Assumption}[section]
\newtheorem{remark}{Remark}[section]
\newtheorem{prop}{Proposition}[section]
\newtheorem{defn}{Definition}[section]
\newtheorem{example}{Example}[section]
\allowdisplaybreaks[4]
\newcounter{nextauthor}
\setcounter{nextauthor}{1}
\def\mathrm{\mbox}

\numberwithin{remark}{section}

 \textheight 9.5in
 \textwidth 6.3in
 \topmargin -20mm
 \oddsidemargin 0mm
 \evensidemargin 0mm
 \parskip 1mm
 \parindent 2em

\begin{document}
\title{{\Large \bf Optimal Control and Stabilization for MFSDEs with Multiple Defaults}\thanks{This work was supported by the National Natural Science Foundation of China (12171339),  the grant from Chongqing Technology and Business University (2356004) and the Fundamental Research Funds for the Central Universities (2682023CX071).}}
\author[a]{Zhun Gou}
\author[b]{Nan-jing Huang \thanks{Corresponding author: nanjinghuang@hotmail.com; njhuang@scu.edu.cn}}
\author[c]{Ming-hui Wang}
\author[d]{Jian-hao Kang}
\affil[a]{\small\it College of Mathematics and Statistics, Chongqing Technology and Business University, Chongqing 400067, P.R.China}
\affil[b]{\small\it Department of Mathematics, Sichuan University, Chengdu, Sichuan 610064, P.R. China}
\affil[c]{\small\it  School of Economic Mathematics, Southwestern University of Finance and Economics, Chengdu, Sichuan 610074, P.R. China}
\affil[d]{\small\it School of Mathematics, Southwest Jiaotong University, Chengdu, Sichuan 610031, P.R. China}

\date{}
\maketitle

\begin{center}
\begin{minipage}{5.6in}
\noindent{\bf Abstract.} In this paper, we investigate an optimal control problem governed by the mean-field stochastic differential equation with multiple defaults, which is motivated by the optimal investment problems.  This global optimal control problem is transformed and divided into several optimal control subproblems governed by  mean-field stochastic differential equations with single default. We derive both the sufficient and the necessary maximum principles for such subproblems and then give the existence and uniqueness of solutions to the mean-field stochastic differential equation with multiple defaults and the mean-field backward stochastic differential equations with multiple defaults, respectively. Moreover,  the obtained results are applied to solve an optimal investment problem, in which the wealth process is governed by a mean-field stochastic differential equation with multiple defaults.  Finally, we study both the mean square exponential stability and the  almost sure exponential stability  under some mild conditions for the solution to the mean-field stochastic differential equation with multiple defaults under optimal feedback control.
\\ \ \\
{\bf Keywords:} Mean-field SDEs; Mean-field BSDEs; Optimal control; Multiple defaults; Enlargement of filtration; Stability.
\\ \ \\
{\bf 2020 Mathematics Subject Classification}: 60H07, 60H20, 60J76, 91G80, 93E20, 93D15.
\end{minipage}
\end{center}

\section{Introduction}
\paragraph{}
The theory of enlargement of filtration, started by Jacod \cite{Jacod1985Grossissement},  Jeulin \cite{Jeulin1980Semi} and  Yor \cite{Yor1985Grossissement} in the 1980s, is a powerful tool to preserve the adaptedness of stochastic processes under a change of filtration. Some interest models have been developed in mathematical finance \cite{Bo2013Stochastic, Di2020on, Jeanblanc2020Characteristics} to study the asymmetry of information, in which different agents have a different level of information and credit default events assumed to occur at a random time $\tau$ in the same financial market. The solutions to corresponding problems under the above models are all obtained by employing the theory of the enlargement of filtration.

Usually, when investors consider credit default events, they assume that these events arrive surprisingly, i.e., $\tau$ is independent of the reference filtration $\mathcal{F}$ (these filtrations always are generated by inside the systems). The standard approach to deal with these default events is mainly based on the theory of enlargement a reference filtration $\mathcal{F}$ by the information of $\tau$, which leads to a new global filtration $\mathfrak{F}$. This approach is called the progressive enlargement of filtration. Besides, the usual hypothesis (H) that any $\mathcal{F}$-martingale remains an $\mathfrak{F}$-martingale is required to ensure the martingale representation in $\mathfrak{F}$.
Based on the above approach and hypothesis, Bachir et al. \cite{Bachir2020Stochastic} transformed the stochastic differential equation (SDE) with default into an SDE driven by an additional martingale under the new filtration. Then, they studied the optimal control problem governed by the new SDE system.
Moreover, Peng and Xu \cite{Peng2009BSDE} studied backwards stochastic differential equations (BSDEs) with the random default time and applied it to investigate default risk.
For more details about the method of progressive enlargement of filtration, we refer to \cite{Aksamit2017Enlargement, Aksamit2019Integral, Calvia2020Risk, Di2020on, Dumitrescu2016BSDEs, Elie2020Large, Shen2013A}.

On the other hand, it is well known that mean-field stochastic differential equation (MFSDE) has attracted much interest from many areas including the field of control theory. The optimal control problem governed by MFSDEs has been studied by numerous researchers (see \cite{agram2018mean, agram2021fokker, agram2019mean, Assouli2023Deep, Dumitrescu2018Stochastic, Li2016controlled, ma2018Infinite, meng2015optimal, moon2021Risk, wang2022maximum, zhang2021stochastic, zhang2016Modeling}). Furthermore, the optimal control problems governed by mean-field stochastic differential equation with a single default (SMFSDE) and the ones of mean-field stochastic differential equation with multiple defaults (MMFSDE) were studied in \cite{Bachir2020Stochastic, gou2021a} and \cite{Pham2010Stochastic}, respectively, in which both the SMFSDE and the MMFSDE are considered in the framework of progressive enlargement of filtration. We also note that the conditional MFSDEs system and its applications have attracted much attention recently (see \cite{buckdahn2023general, Caemona2016A, ma2018Kyle}). This system plays an important role in mean-field optimal control problems with partial information \cite{Buckdahn2017A}, and in mean-field games (MFGs) under asymmetric information (especially in MFGs with major and minor players \cite{Carmona2018Mean1, Carmona2018Mean2}).   Nevertheless, to the best of our knowledge, the existing results including \cite{Buckdahn2017A, Pham2010Stochastic} are almost unapplicable to solve the following practical problem, which is  inspired from \cite{Bachir2020Stochastic}.
\begin{prob}\label{+8}
Consider the wealth process with default
\begin{equation*}
\begin{cases}
dX(t)=X(t-)\left[(\alpha-u(t))dt+\beta dB_t+\sum \limits_{i=1}^{2}\varepsilon_id\mathbb{I}_{\tau_i\leq t}\right]+\delta\overline{X}(t)dt, \quad t\in(0,T],\\
X(0)=x_0\in \mathbb{R},
\end{cases}
\end{equation*}
where $\alpha,\beta,\varepsilon_i,\delta$ are all constants satisfying some mild conditions; $u(t)$ is the control; $\tau_i$ is a random variable representing the default time; $\mathbb{I}_{\tau_i\leq t}$ is the indicator process; $\delta\overline{X}(t)$ is the mean-field term. Moreover, the performance function is given by
$$
J(u)=\mathbb{E}\left[\int_0^T\theta_1\ln (X(t)u(t))dt+\theta_2\ln X(T)\right],
$$
where $\theta_1$ and $\theta_2$ are positive constants. For given admissible control set $\mathcal{U}$, we would like to find $\widetilde{u}(\cdot)\in \mathcal{U}$ such that
$J(\widetilde{u})=\max\limits_{u(\cdot)\in \mathcal{U}}J(u)$.
\end{prob}
Problem  \ref{+8} is nothing but a finite horizon optimal control problem governed by the MMFSDE, of which the discipline is still not fully explored and much is desired to be done. Thus the first purpose of current paper is  to investigate the finite horizon optimal control problems governed by the systems of MMFSDEs under some mild conditions, and ensure the solvability of such systems.

After  achieving the  solvability of SDEs, one important issue of studying the well-posedness of SDEs is the stability of the solutions, in particular for the qualitative study and/or for the long time asymptotic behaviour of the solutions  \cite{shen2022stability}. In general, a solution to an SDE  is
stable if it is insensitive for small changes of the initial value or the parameters of the SDE. Up to now, the stability of the solutions to SDEs has been very well
developed since the seminal work \cite{khasminskii2012stochastic}, which extended  the celebrating concept of stability of deterministic dynamic systems introduced by Lyapunov \cite{liapounoff1907probleme}. Various stability,  were carried out in a series of works by Mao \cite{Mao1994Exponential, mao1999stability, MAO2008STOCHASTIC}, such as stochastic stability, stochastic asymptotical stability, moment exponential stability (including mean square exponential stability), almost sure  exponential stability, mean square polynomial stability etc. There are also fruitful results for the stability of the solutions to various type SDEs, such as SDEs with delays \cite{komori2020stability, luo2022some, singh2021asymptotic}, fractional SDEs \cite{luo2022some, omaba2021growth, shen2022stability, singh2021asymptotic} and stochastic functional differential equations \cite{cao2021razumikhin, shen2020controllability}. Moreover, the stability of the solutions to controlled SDEs (respectively MFSDEs) under optimal feedback controls has been established in \cite{lei2020feedback, li2020stabilisation, zhao2021stabilization} (respectively \cite{sun2024turnpike, wang2024optimal}). Very recently, the stability of the solutions to controlled MFSDEs under optimal feedback controls was established in \cite{sun2024turnpike, wang2024optimal}. However, to the best of our knowledge, there are no papers investigating the stability of the solutions to MMFSDEs. Thus, the second purpose of the current paper is to investigate the stability of the solutions to the controlled MMFSDEs under optimal feedback controls.

The main contributions of this paper can be summarized as follows: (i) The MMFSDE is considered in the sense of conditional expectation, which is quite different from the classical MFSDEs \cite{agram2018mean, wei2016stochastic}, and the optimal control problem governed by the MMFSDE is proposed; (ii) Backward induction method is derived for dividing our optimal control problems into several subproblems, while it is not available to solve traditional mean-field type (not conditional type) optimal control problems. (iii) Both sufficient and necessary maximum principles are obtained for such subproblems with random coefficients; (iv)  The existence and uniqueness results of solutions are obtained for both MMFSDEs and mean-field backward stochastic differential equation with multiple defaults (MMFBSDEs) under some mild conditions; (v) Mean square exponential stability and almost sure exponential stability are  established for the solutions to controlled MMFSDEs under  optimal feedback controls.

The rest of this paper is structured as follows. The next section introduces some necessary preliminaries including the enlargement progressive of filtration. After that in Section 3, the backward induction method is given, which transforms the global optimal control problem into several optimal  control subproblems, and both sufficient and necessary maximum principles are derived for these subproblems. In Section 4, the existence and uniqueness results of solutions are given for both MMFSDEs and MMFBSDEs, and the obtained results are applied to solve an optimal investment problem. Before we conclude, the mean square exponential stability and the  almost sure exponential stability are established for controlled MMFSDEs under optimal feedback control  in Section 5.

%%%%%%%%%%%%%%%%%%%%%%%%%%%%%%%%%%%%
\section{Problem Formulation}
In this paper, we consider the following system of controlled MMFSDE in the complete probability space $(\Omega,\mathfrak{F},\mathfrak{F}_t,\mathbb{P})$:
\begin{equation}\label{SDE1}
\begin{cases}
dX(t)=b(t,X(t),M(X(t)),u(t),N(u(t)))dt+\sigma(t,X(t),M(X(t)),u(t),N(u(t)))dB_t\\
\qquad\qquad+h(t,X(t),M(X(t)),u(t),N(u(t)))d\mathbb{H}_t, \quad t\in(0,T],\\
X(0)=x_0,
\end{cases}
\end{equation}
where $X(\cdot)$ is the state variable, $u(\cdot)$ denotes the control variable, and $M(X(\cdot))$, $N(u(\cdot))$ are the conditional mean-field terms. Denote $\mathcal{F}=(\mathcal{F}_t)_{t\geq0}$ the right continuous, increasing and complete filtration generated by the one-dimensional Brownian motion $(B_t)_{t\geq0}$. Set $\mathcal{I}_0=\{0,1,\cdots,n\}$ and $\mathcal{I}=\{1,2,\cdots,n\}$. The default term
$$
\mathbb{H}_t=\mathbb{H}(t)=\sum \limits_{k=1}^{n}\mathbb{H}^k(t)=\sum \limits_{k=1}^{n}\mathbb{I}_{\tau_k\leq t}
$$
is generated by a sequence of ordered default times $\{\tau_k\}_{k\in \mathcal{I}}$ with
$$
0=\tau_0\leq \tau_1\leq \tau_2\leq\cdots \leq\tau_n\leq \tau_{n+1}=T \quad a.s..
$$
For $k\in\mathcal{I}$ and all $t\geq0$, we denote $\mathcal{N}^k=\sigma\left(\mathbb{I}_{\tau_k\leq u},u\in[0,t]\right)$. The global information is then naturally defined as the progressive enlargement of filtration  $\mathfrak{F}=\mathcal{F}\vee \mathcal{N}^1\vee\mathcal{N}^2\vee\cdots\vee\mathcal{N}^n$. The filtration $\mathfrak{F}=(\mathfrak{F}_t)_{t\geq0}$ is the smallest right continuous extension of $\mathcal{F}$ such that each $\tau_k\,(k\in\mathcal{I})$ becomes an $\mathfrak{F}$-stopping time. $x_0$ is an $\mathfrak{F}_{0}$-measurable and square integrable random variable, the coefficients $b$, $\sigma$ and $h$ will be introduced later. Furthermore, we give the following necessary assumptions.

\begin{assumption}\label{AH}
\begin{enumerate}[($\romannumeral1$)]
\item (Hypothesis $(\mathcal{H})$) Every c\`{a}dl\`{a}g $\mathcal{F}$-martingale remains an $\mathfrak{F}$-martingale.

\item
%\cite{Aksamit2017Enlargement}
Suppose that there exists an $\mathfrak{F}$-predictable (respectively,  $\mathcal{F}$-predictable) process $\gamma^{\mathfrak{F}_k}$ (respectively, $\gamma_k=\gamma^{\mathcal{F}_k}$) with $\gamma^{\mathfrak{F}_k}(s)=\mathbb{I}_{s<\tau_k}\gamma^{\mathcal{F}_k}(s)$ such that
$$
A^k(t)=\mathbb{H}^k(t)-\int_0^t\gamma^{\mathfrak{F}^k}(s)ds=\mathbb{H}^k(t)-\int_0^{t\wedge \tau_k}\gamma^{\mathcal{F}_k}(s)ds\quad (t\geq0)
$$
is an $\mathfrak{F}$-martingale with jump time $\tau_k$. The process $\gamma^{\mathfrak{F}_k}$ (respectively, $\gamma^{\mathcal{F}}$) is called the $\mathfrak{F}$-intensity (respectively, $\mathcal{F}$-intensity) of $\tau_k$. In addition, assume that $\gamma^{\mathfrak{F}_k}$ is upper bounded.
\end{enumerate}
\end{assumption}

\begin{remark}
Hypothesis $(\mathcal{H})$ in Assumption \ref{AH} is also called the immersion property, i.e, the filtration $\mathcal{F}$ is immersed in $\mathfrak{F}$. Under Assumption \ref{AH}, for any $\mathfrak{F}$-optional process $X(t)$ with $\mathbb{E}[\int_0^T X(t)^2dt]<\infty$, the stochastic integral $\int_0^T X(t)dB_t$ is well defined \cite{Kharroubi2014Progressive}.
\end{remark}
\begin{remark}
In the rest of the paper we set $\Upsilon^{\mathfrak{F}}=\sum \limits_{k=1}^{n}\gamma^{\mathfrak{F}_k}$ and $\mathbb{A}(t)=\mathbb{H}(t)-\int_0^t\Upsilon^{\mathfrak{F}}ds$. It is easy to see that
\begin{align*}
\mathbb{E}[\mathbb{A}(t)|\mathfrak{F}_t]&=\mathbb{E}\left[\sum\limits_{k=1}^{n} \left(\mathbb{H}^k(t)-\int_0^t\gamma^{\mathfrak{F}_k}(s)ds\right)\Big|\mathfrak{F}_t\right]
=\mathbb{E}\left[\sum\limits_{k=1}^{n} A^k(t)\Big|\mathfrak{F}_t\right]\\
&=\sum\limits_{k=1}^{n} \mathbb{E}\left[A^k(t)|\mathfrak{F}_t\right]=\sum\limits_{k=1}^{n} A^k(t)=\mathbb{A}(t).
\end{align*}
Therefore, $\mathbb{A}(t)$ is an $\mathfrak{F}$-martingale.
\end{remark}

Next, we recall some useful sets and spaces, and the definition of the Wasserstein metric.
\begin{defn}
\begin{itemize}
\item[(i)] Let $\mathcal{O}(\mathfrak{F})$ (respectively, $\mathcal{P}(\mathfrak{F})$) denote the $\sigma$-algebra of $\mathfrak{F}$-optional (respectively, $\mathfrak{F}$-predictable) measurable subsets of $[0,T]\times \Omega$, i.e.,  $\mathcal{O}(\mathfrak{F})$ (respectively, $\mathcal{P}(\mathfrak{F})$) is  the $\sigma$-algebra generated by the right continuous (respectively, left continuous) $\mathfrak{F}$-adapted processes.

\item[(ii)] For a given Borel set $\mathbb{U}\subset\mathbb{R}$, let $\mathcal{O}(\mathfrak{F},\mathbb{U})$ denote the set of elements in $\mathcal{O}(\mathfrak{F})$ valued in $\mathbb{U}$.

\item[(iii)] $L^2([0,T])$ is the space of all square integrable functions $f$ with
$$
\|f\|_{L^2([0,T])}^2=\int_0^T|f(t)|^2dt<+\infty.
$$

\item[(iv)]For $T_0\in[0,T]$, $L^2_{T_0}(\mathbb{P})$ is the space of $\mathfrak{F}_{T_0}$-measurable random variable $\xi$ such that
$$
\|\xi\|_{L^2_{T_0}(\mathbb{P})}^2=\mathbb{E}[\xi^2]<+\infty.
$$

\item[(v)] $L_{T}^{\mathfrak{F}}$ is the set of all $\mathcal{O}(\mathfrak{F})$-measurable processes $X(t)$ such that
$$
\|X(\cdot)\|^2_{L_{T}^{\mathfrak{F}}}:=\mathbb{E}\left(\int_{0}^{T}|X(t)|^2dt\right)<+\infty,
$$
\item[(v)] $L_{+\infty}^{\mathfrak{F},\delta}$ is the set of all $\mathcal{O}(\mathfrak{F})$-measurable processes $X(t)$ such that
$$
\|X(\cdot)\|^2_{L_{+\infty}^{\mathfrak{F},\delta}}:=\mathbb{E}\left(\int_{0}^{+\infty}e^{-\delta t}|X(t)|^2dt\right)<+\infty,
$$
where $\delta$ is a given constant, and $H_{T}^{\mathfrak{F}}$ is the set of all $\mathcal{P}(\mathfrak{F})$-measurable processes $X(t)$ such that
$$
\|X(\cdot)\|^2_{H_{T}^{\mathfrak{F}}}:=\mathbb{E}\left(\int_{0}^{T}|X(t)|^2dt\right)<+\infty,
$$

\item[(vi)] $L_{T}^{\mathfrak{F},A^k}$ is the set of all $\mathcal{O}(\mathfrak{F})$-measurable processes $X(t)$ such that
$$
\|X(\cdot)\|^2_{L_{T}^{\mathfrak{F},A^k}}:=\mathbb{E}\left(\int_{0}^{T}\gamma^{\mathfrak{F}^{k}}(t)|X(t)|^2dt\right)<+\infty
$$
and $H_{T}^{\mathfrak{F},A^k}$ is the set of all $\mathcal{P}(\mathfrak{F})$-measurable processes $X(t)$ such that
$$
\|X(\cdot)\|^2_{H_{T}^{\mathfrak{F},A^k}}:=\mathbb{E}\left(\int_{0}^{T}\gamma^{\mathfrak{F}^{k}}(t)|X(t)|^2dt\right)<+\infty.
$$

\item[(vii)] $\mathcal{M}^k$, $\mathcal{G}^k$, $\mathcal{S}^k$ are the subfiltrations of $\mathfrak{F}$ such that $\mathcal{M}^k_t,\mathcal{G}^k_t,\mathcal{S}^k_t\subseteq \mathfrak{F}_t$ for all $t\in[0,T]$ and $\mathcal{M}^k_{\tau_k}=\mathcal{S}^k_{\tau_k}= \mathfrak{F}_{\tau_k}$.
\end{itemize}
\end{defn}

\begin{defn}
 Consider the space of probability measures $m$'s on $\mathbb{R}$ with $\int_{\mathbb{R}}x^2dm(x)<+\infty$. For any such $m$ and $m'$,  the 2-Wasserstein metric of them is defined by
$$
W_2(m,m')=\inf\limits_{\pi\in \Pi_2(m,m')}\left[\int_{\mathbb{R}\times\mathbb{R}}(x-x')^2\pi(dx,dx')\right]^{\frac{1}{2}},
$$
where $\Pi_2(m,m')$ represents the set of joint probability measures with respective marginals $m$ and $m'$. Denote $\mathcal{P}_2(\mathbb{R})$ the space equipped with the 2-Wasserstein metric.  The infimum is attainable such that there are random variables $X_{m}$ and $X_{m'}$ associated with $m$ and $m'$ respectively so that
$$
W_2(m,m')=\sqrt{\mathbb{E}\left[(X_{m}-X_{m'})^2\right]}.
$$
\end{defn}

Usually, we denote $\mathcal{S}^k_t$ the observed information up to time $t$, and use $\mathcal{M}^k_t$ and $\mathcal{G}^k_t$ to represent the information available to the state and the controller at time $t$, respectively. For instance, for $t\in[\tau_k,\tau_{k+1}]$, $\mathcal{S}^k_t=\mathfrak{F}_{\tau_k+(t-\tau_k-\delta_0)^{+}}$ represents the delayed information flow compared to $\mathfrak{F}_t$, where $\delta_0$ is a positive constant. And the mean-field term $N(u(t))$ can be removed when $\mathcal{G}^k_t=\mathcal{S}^k_t$. Moreover, for $X(\cdot),u(\cdot)\in L_{T}^{\mathfrak{F}}$, the conditional mean-field terms $M(X(\cdot))$ and $N(u(\cdot))$ are given as follows:
\begin{align*}
M(X(t))&=\sum\limits_{k=0}^{n-1}\mathbb{E}\left[X^k(t)\Big|\mathcal{M}^k_t\right]\mathbb{I}_{t\in [\tau_k,\tau_{k+1})}+\mathbb{E}\left[X^n(t)\Big|\mathcal{M}^n_t\right]\mathbb{I}_{t\in [\tau_n,T]},\\
N(u(t))&=\sum\limits_{k=0}^{n-1}\mathbb{E}\left[u^k(t)\Big|\mathcal{G}^k_t\right]\mathbb{I}_{t\in [\tau_k,\tau_{k+1})}+\mathbb{E}\left[u^n(t)\Big|\mathcal{G}^n_t\right]\mathbb{I}_{t\in [\tau_n,T]},
\end{align*}
where $X^i(t)$ and $u^i(t)$ ($i\in \mathcal{I}_0$) are determined by
\begin{align*}
X(t)&=\sum\limits_{k=0}^{n-1}X^k(t)\mathbb{I}_{t\in [\tau_k,\tau_{k+1})}+X^n(t)\mathbb{I}_{t\in [\tau_n,T]},\\
u(t)&=\sum\limits_{k=0}^{n-1}u^k(t)\mathbb{I}_{t\in [\tau_k,\tau_{k+1})}+u^n(t)\mathbb{I}_{t\in [\tau_n,T]}.
\end{align*}
The above decompositions can be ensured by Lemma 2.1 in \cite{Pham2010Stochastic}. Without ambiguity, we write $A(s)=A(s-)$ and $H(s)=H(s-)$ since there are only finite default jumps. In the sequel we regard the conditional mean-field terms as some elements in the metric space $\mathcal{P}_2(\mathbb{R})$, which is complete and separable. And so under mild conditions, the functionals of the conditional mean-field terms enjoys the Frechet derivatives defined as follows.

\begin{defn}
Let $\mathcal{X}$ and $\mathcal{Y}$ be two Banach spaces. A mapping $F: \mathcal{X}\rightarrow\mathcal{Y}$ is said to have a directional derivative (or G\^{a}teaux derivative) at $v\in \mathcal{X}$ in the direction $\varsigma\in \mathcal{X}$ if
$$
D_{\varsigma}F(v)=\lim \limits_{\varepsilon\rightarrow 0}\frac{1}{\varepsilon}(F(v+\varepsilon\varsigma)-F(v))
$$
exists in $\mathcal{Y}$. Moreover, $F$ is Fr\'{e}chet  differentiable at $v\in \mathcal{X}$ if there exists a continuous linear map $A:\mathcal{X}\rightarrow\mathcal{Y}$ such that
$$
\lim \limits_{\|\varsigma\|_{\mathcal{X}}\rightarrow 0}\frac{1}{\|\varsigma\|_{\mathcal{X}}}\|F(v+\varsigma)-F(v)-A(\varsigma)\|_{\mathcal{Y}}=0,
$$
where $A(h) =\langle A,h\rangle$ is the action of the liner operator $A$ on $h$. In this case $A$ is called the gradient (or Fr\'{e}chet derivative) of $F$ at $v$ which can be written as $A=\nabla_{v}F$. If $F$ is Fr\'{e}chet differentiable at $v$ with Fr\'{e}chet derivative $\nabla_{v}F$, then $F$ has a directional derivative in all directions $\varsigma\in \mathcal{X}$ and $D_{\varsigma}F(v)=\nabla_{v}F(\varsigma)=\langle\nabla_{v}F,\varsigma\rangle$.
\end{defn}

\begin{remark}
Clearly, if $F$ is a linear operator, then $\nabla_{v}F=F$ for all $v\in \mathcal{X}$. By choosing $\mathcal{X}=\mathcal{P}_2(\mathbb{R})\times\mathbb{R}$ and $\mathcal{Y}=\mathfrak{L}_2(\mathbb{R})$, the Fr\'{e}chet derivatives of mean-field terms can be well defined \cite{Bensoussan2023mean}.
\end{remark}

The admissible control set $\mathcal{U}^{ad}$ of the control variable is defined as follows:
$$
\mathcal{U}^{ad}=\left\{u\Big|u(t)=\sum\limits_{k=0}^{n-1}u^k(t)\mathbb{I}_{t\in [\tau_k,\tau_{k+1})}+u^n(t)\mathbb{I}_{t\in [\tau_n,\tau_{n+1}]},u^k\in\mathcal{U}_k^{ad},\;k\in \mathcal{I}_0\right\}.
$$
Here $\mathcal{U}_k^{ad}\,(k\in\mathcal{I}_0)$ is the separable metric subspace of $\mathcal{O}(\mathcal{S}^k,\mathbb{U}^k)$, where $(\mathbb{U}^k)_{k\in\mathcal{I}_0}$ is a given sequence of subsets in $\mathbb{R}$. We note here that the admissible control set is allowed to be variant. There exists an important distinction between the controlled system with the Poisson jumps and the one with defaults, i.e, the classical formulation  has to fix an admissible control set which is invariant during the time horizon, while the more general formulation can deal with  different admissible control sets between two default times. For these relevant works, we refer to \cite{Aksamit2017Enlargement, Pham2010Stochastic}. Furthermore, we study the optimal control problem in the sense of partial information, which is inspirited by some
interesting financial phenomena; for instance in some situations of real markets like insider trading, one investor may get more information than the others, and then, the investor can make a better decision than the others \cite{wang2017a, Dumitrescu2018Stochastic}.
For simplicity, we use the notation ${u}=({u}^0,{u}^1,\cdots,{u}^n)$ for $u\in \mathcal{U}^{ad}$ in the sequel. Without ambiguity, we set $u^k(t)=0$ for $t\notin[\tau_k,\tau_{k+1})$.

The following lemma will be frequently used in the sequel.
\begin{lemma}\label{ito}\cite{Peng2009BSDE}
Let $X_i(t)\;(i=1,2)$ be an It\^{o}'s jump-diffusion process given by
$$
dX_i(t)=b_i(t)dt+\sigma_i(t)dB_t+\sum \limits_{k=1}^{n}h_k^i(t)dA_t^k,
$$
where $b_i(\cdot),\sigma_i(\cdot)\in L_{T}^{\mathfrak{F}}$ and $h^i_k(\cdot)\in L_{T}^{\mathfrak{F},A^k}\;(k\in \mathcal{I})$. Then
\begin{align*}
d\left(X_1(t)X_2(t)\right)=&X_1(t)dX_2(t)+X_2(t)dX_1(t)+[dX_1(t),dX_2(t)]\\
=&X_1(t)dX_2(t)+X_2(t)dX_1(t)+\sigma_1(t)\sigma_2(t)dt+\sum \limits_{k=1}^{n}h^1_k(t)h^2_k(t)d\mathbb{H}^k_t.
\end{align*}
Especially, if $h^i_k(t)=h^i(t)$ for all $k\in \mathcal{I}$, then
$$
d\left(X_1(t)X_2(t)\right)=X_1(t)dX_2(t)+X_2(t)dX_1(t)+\sigma_1(t)\sigma_2(t)dt+h^1(t)h^2(t)d\mathbb{H}_t.
$$
\end{lemma}

The finite horizon optimal control problem considered in this paper is specified as follows.
\begin{prob}\label{pb}
The performance functional associated to the control $u=(u^0,u^1,\cdots,u^n)\in \mathcal{U}^{ad}=\mathcal{U}_0^{ad}\times\mathcal{U}_1^{ad}\times\cdots\times\mathcal{U}_n^{ad}$ takes the following form
$$
J(x_0,\mathfrak{L}(x_0),u)=\mathbb{E}\left[\int_0^Tf(t,X(t),M(X(t)),u(t),N(u(t)))dt+g(X(T),M(X(T)))\Big|\mathfrak{F}_{0}\right].
$$
Here, $X(t)$ is described by \eqref{SDE1}, $\mathfrak{L}(\cdot)$ represents the law of a random variable, and the running gain function $f:[0,T]\times \Omega\times \mathbb{R}\times\mathcal{P}_2(\mathbb{R})\times \mathbb{R}\times\mathcal{P}_2(\mathbb{R})\rightarrow\mathbb{R}$ and the terminal gain function $g:\Omega\times\mathbb{R}\times\mathcal{P}_2(\mathbb{R})\rightarrow\mathbb{R}$ are progressively measurable functions satisfying
\begin{equation}\label{++1}
|f(t,X,M,u,N)|+|g(X,M)|\leq C(1+X^2+| M|_{\mathcal{P}_2(\mathbb{R})}^2+u^2+|N|_{\mathcal{P}_2(\mathbb{R})}^2)
\end{equation}
for any $(t,X,M,u,N)\in[0,T]\times \mathbb{R}\times\mathcal{P}_2(\mathbb{R})\times \mathbb{R}\times\mathcal{P}_2(\mathbb{R})$ and for some constant $C>0$, a.s.. Moreover,  $f$ can be decomposed as
\begin{align*}
f(t,X(t),M(X(t)),u(t),N(u(t)))&=\sum \limits_{k=0}^{n-1}f^k(t,X^k(t),M(X^k(t)),u^k(t),N(u^k(t)))\mathbb{I}_{t\in [\tau_k,\tau_{k+1})}\\
&\quad\mbox{}+f^n(t,X^n(t),M(X^n(t)),u^n(t),N(u^n(t)))\mathbb{I}_{t\in [\tau_n,T]},
\end{align*}
where $f^k\,(k\in\mathcal{I}_0)$. The problem is to find the optimal control
$$\widehat{u}(t)=\sum\limits_{k=0}^{n-1}\widehat{u}^k(t)\mathbb{I}_{t\in [\tau_k,\tau_{k+1})}+\widehat{u}^n(t)\mathbb{I}_{t\in [\tau_n,T]} \in \mathcal{U}^{ad}$$
such that
\begin{equation*}
V(x_0,\mathfrak{L}(x_0))=J(x_0,\mathfrak{L}(x_0),\widehat{u})=\mathop{\esssup} \limits_{u\in\mathcal{U}^{ad}}J(x_0,\mathfrak{L}(x_0),u),
\end{equation*}¡¢
where the optimal controlled pair $(\widehat{X},\widehat{u})$ is governed by \eqref{SDE1}.
\end{prob}

\begin{remark}
In the running gain, there is a change of regimes after each default time, which is in the spirit of the concept of forward or progressive utility functions introduced in \cite{Musiela2009Portfolio}.
\end{remark}

\section{Backward Induction Method and Maximum Princeples}
In this section, we first give the backward induction method for solving Problem \ref{pb}. We transform the global optimal control problem into several subproblems. Then, we give both the sufficient and necessary conditions for these optimal control subproblems.

\subsection{Backward Induction Method}
To begin with, we make the following assumption throughout this paper.
\begin{assumption}
Suppose that $b,\sigma,h:[0,T]\times \Omega\times \mathbb{R}\times\mathcal{P}_2(\mathbb{R})\times \mathbb{R}\times\mathcal{P}_2(\mathbb{R})\rightarrow\mathbb{R}$ are all progressively measurable functions. Furthermore, assume that for any $\pi\in\{b,\sigma,h\}$, $\pi$ admits the following composition
$$
\pi^k(t,X^{k}(t),M(X^{k}(t)),u^{k}(t),N(u^{k}(t)))=\pi(t,X^{k}(t),M(X^{k}(t)),u^{k}(t),N(u^{k}(t)))\mathbb{I}_{t\in (\tau_k,\tau_{k+1}]}
$$
\end{assumption}
Under the above assumption, we are able to restrict equation \eqref{SDE1} on the random time interval $t\in[\tau_k,\tau_{k+1}]$ $(k\in\mathcal{I}_0)$:
\begin{equation*}
\begin{cases}
dX^{k}(t)=b^{k}(t,X^{k}(t),M(X^{k}(t)),u^{k}(t),N(u^{k}(t)))dt+\sigma^{k}(t,X^{k}(t),M(X^{k}(t)),u^{k}(t),N(u^{k}(t)))dB_t\\
\qquad\qquad\; +h^{k}(t,X^{k}(t),M(X^{k}(t)),u^{k}(t),N(u^{k}(t)))d\mathbb{H}^{k+1}_t, \quad t\in(\tau_k,\tau_{k+1}],\\
X^{k}(\tau_k)=X^{k-1}(\tau_k),
\end{cases}
\end{equation*}
which is equivalent to
\begin{equation}\label{SDE2}
\begin{cases}
dX^{k}(t)=b_{\gamma}^{k}(t,X^{k}(t),M(X^{k}(t)),u^{k}(t),N(u^{k}(t)))dt+\sigma^{k}(t,X^{k}(t),M(X^{k}(t)),u^{k}(t),N(u^{k}(t)))dB_t\\
\qquad\qquad\; +h^{k}(t,X^{k}(t),M(X^{k}(t)),u^{k}(t),N(u^{k}(t)))dA^{k+1}_t, \quad t\in(\tau_k,\tau_{k+1}],\\
X^{k}(\tau_k)=X^{k-1}(\tau_k).
\end{cases}
\end{equation}
Here
\begin{align*}
&\quad\; b^k_{\gamma}(t,X^{k}(t),M(X^{k}(t)),u^{k}(t),N(u^{k}(t)))\\
&=b^k(t,X^{k}(t),M(X^{k}(t)),u^{k}(t),N(u^{k}(t)))+\gamma^{\mathfrak{F}^{k+1}}(t)h(t,X^{k}(t),M(X^{k}(t)),u^{k}(t),N(u^{k}(t))),
\end{align*}
$X^0(0)=X^{-1}(0)=x_0\in \mathbb{R}$, $X^n(T)=X(T)$ and $h^n\equiv 0$.
We show in Subsection 4.1 that under some normal conditions, there exists a unique square integrable optional solution to \eqref{SDE1}. Since Lemma 2.1 in \cite{Pham2010Stochastic} (or Theorem 6.5 of \cite{Song2014optimal}) provides a key decomposition of optional processes with respect to the progressive enlargement of filtration, we can see that the solution $X(t)$ of \eqref{SDE1} has the following decomposition:
\begin{equation}\label{112}
X(t)=\sum\limits_{k=0}^{n-1}X^k(t)\mathbb{I}_{t\in [\tau_k,\tau_{k+1})}+X^n(t)\mathbb{I}_{t\in [\tau_n,\tau_{n+1}]},
\end{equation}
where $X^k(t)$ is governed by \eqref{SDE2}.

 Now we use the decomposition to derive the following backward induction method for the performance function.
\begin{prop}\label{prop perfun}
Consider the following backward induction sequence ($k\in\mathcal{I}_0$)
\begin{align*}
J^n(X^{n-1}(\tau_n),\mathfrak{L}^{n-1}(X^{n-1}(\tau_n)),u^n)&=\mathbb{E}\Big[\int_{\tau_n}^Tf^{n}(t,X^n(t),M(X^n(t)),u^n(t),N(u^n(t)))dt\\
&\qquad\mbox{}+\mathfrak{L}^{n}(X^n(T),M(X^n(T)))\Big|\mathfrak{F}_{\tau_n}\Big],\\
J^{k}(X^{k-1}(\tau_k),\mathfrak{L}^{k-1}(X^{k-1}(\tau_k)),u^k,\cdots,u^n)&=\mathbb{E}\Big[\int_{\tau_k}^{\tau_{k+1}}f^{k}(t,X^k(t),M(X^k(t)),u^k(t),N(u^k(t)))dt\\
&\qquad\mbox{}+J^{k+1}(X^k(\tau_{k+1}),\mathfrak{L}^{k}(X^{k}(\tau_{k+1})),u^{k+1},\cdots,u^n)\Big|\mathfrak{F}_{\tau_k}\Big]
\end{align*}
with $\mathfrak{L}^{n}=g$ and $\mathfrak{L}^{-1}=\mathfrak{L}$. Then $J^{0}(x_0,\mathfrak{L}(x_0),u)=J^{0}(X^{-1}(0),\mathfrak{L}^{-1}(X^{-1}(0)),u^0,\cdots,u^n)=J(x_0,\mathfrak{L}(x_0),u)$.
\end{prop}
\begin{proof}
Since $\mathbb{E}\left[X^{k-1}(\tau_k)\Big|\mathcal{M}_{\tau_k}\right]=\mathbb{E}\left[X^{k-1}(\tau_k)\Big|\mathfrak{F}_{\tau_k}\right]=X^{k-1}(\tau_k)$, it follows that $J^{k}(X^{k-1}(\tau_k),u^k,\cdots,u^n)$ and $J^n(X^{n-1}(\tau_n),u^n)$ are well-defined. Clearly,
\begin{align*}
&\quad\;J^{n-1}(X^{n-2}(\tau_{n-1}),\mathfrak{L}^{n-2}(X^{n-2}(\tau_{n-1})),u^{n-1},u^n)\\
&=\mathbb{E}\left[\int_{\tau_{n-1}}^{\tau_{n}}f^{n-1}(t,X^{n-1}(t),M(X^{n-1}(t)),u^{n-1}(t),N(u^{n-1}(t)))dt+J^{n}(X^{n-1}(\tau_n),u^n)\Big|\mathfrak{F}_{\tau_{n-1}}\right]\\
&=\mathbb{E}\left[\int_{\tau_{n-1}}^{\tau_{n}}f(t,X^{n-1}(t),M(X^{n-1}(t)),u^{n-1}(t),N(u^{n-1}(t)))dt\Big|\mathfrak{F}_{\tau_{n-1}}\right]\\
&\quad\mbox{}+\mathbb{E}\left[\mathbb{E}\left[\int_{\tau_n}^Tf(t,X^n(t),M(X^n(t)),u^n(t),N(u^n(t)))dt+g(X^n(T),M(X^n(T)))\Big|\mathfrak{F}_{\tau_{n}}\right]\Big|\mathfrak{F}_{\tau_{n-1}}\right]\\
&=\mathbb{E}\left[\int_{\tau_{n-1}}^{\tau_{n}}f^{n-1}(t,X^{n-1}(t),M(X^{n-1}(t)),u^{n-1}(t),N(u^{n-1}(t)))dt\Big|\mathfrak{F}_{\tau_{n-1}}\right]\\
&\quad\mbox{}+\mathbb{E}\left[\int_{\tau_n}^Tf^{n}(t,X^n(t),M(X^n(t)),u^n(t),N(u^n(t)))dt+g(X^n(T),M(X^n(T)))\Big|\mathfrak{F}_{\tau_{n-1}}\right]\\
&=\mathbb{E}\Big[\int_{\tau_{n-1}}^{T}f^{n-1}(t,X(t),M(X(t)),u(t),N(u(t)))\mathbb{I}_{t\in [\tau_{n-1},\tau_{n})}\\
&\quad\mbox{}+f^n(t,X(t),M(X(t)),u(t),N(u(t)))\mathbb{I}_{t\in [\tau_n,T]}dt+g(X(T),M(X(T)))\Big|\mathfrak{F}_{\tau_{n-1}}\Big].
\end{align*}
Noticing that $X^n(T)=X(T)$ and $\tau_{0}=0$, it follows from the backward induction that
\begin{align*}
J^{0}(x_0,\mathfrak{L}(x_0),u)&=J^{0}(X^{-1}(0),\mathfrak{L}^{-1}(X^{-1}(0)),u^0,\cdots,u^n)\\
&=\mathbb{E}\Big[\int_{\tau_0}^{T}\sum \limits_{i=0}^{n-1}f^i(t,X(t),M(X(t)),u(t),N(u(t)))\mathbb{I}_{t\in [\tau_i,\tau_{i+1})}\\
&\quad\mbox{}+f^n(t,X(t),M(X(t)),u(t),N(u(t)))\mathbb{I}_{t\in [\tau_n,T]}dt+g(X^n(T),M(X^n(T)))\Big|\mathfrak{F}_{\tau_{0}}\Big]\\
&=\mathbb{E}\Big[\int_{0}^{T}f(t,X(t),M(X(t)),u(t),N(u(t)))dt+g(X^n(T),M(X^n(T)))\Big|\mathfrak{F}_{0}\Big].
\end{align*}
This ends the proof.
\end{proof}

The following theorem provides a decomposition for the optimal control of Problem \ref{pb}, which is essentially a conditional mean-field type dynamic programming principle.
\begin{thm}\label{multi}
The optimal control $\widetilde{u}=(\widetilde{u}^0,\cdots,\widetilde{u}^n)$ can be obtained by solving each optimal control $u^k\,(k\in\mathcal{I}_0)$ of the following subproblems ($k\in \mathcal{I}_0$):
\begin{align}\label{1}
V^k(X^{k-1}(\tau_k),\mathfrak{L}^{k-1}(X^{k-1}(\tau_k)))&=J^{k}(X^{k-1}(\tau_k),\mathfrak{L}^{k-1}(X^{k-1}(\tau_k)),\widetilde{u}^k,\widetilde{u}^{k+1},\cdots,\widetilde{u}^n)\nonumber\\
&=\mathop{\esssup} \limits_{u^{k}\in \mathcal{U}_k^{ad}}J^{k}(X^{k-1}(\tau_k),\mathfrak{L}^{k-1}(X^{k-1}(\tau_k)),{u}^k,\widetilde{u}^{k+1},\cdots,\widetilde{u}^n).
\end{align}
\end{thm}

\begin{proof}
Omitting the elements $\widetilde{u}^{k+1},\cdots,\widetilde{u}^n$ $(k\in\mathcal{I}_0)$, it follows that
\begin{align}\label{+3}
V^k(X^{k-1}(\tau_k),\mathfrak{L}^{k-1}(X^{k-1}(\tau_k)))&=J^{k}(X^{k-1}(\tau_k),\mathfrak{L}^{k-1}(X^{k-1}(\tau_k)),\widetilde{u}^k)\nonumber\\
&=\mathop{\esssup} \limits_{u^{k}\in \mathcal{U}_k^{ad}}J^{k}(X^{k-1}(\tau_k),\mathfrak{L}^{k-1}(X^{k-1}(\tau_k)),u^{k})\nonumber\\
&=\mathop{\esssup} \limits_{u^{k}\in \mathcal{U}_k^{ad}}\mathbb{E}\Big[\int_{\tau_k}^{\tau_{k+1}}f^{k}(t,X^k(t),M(X^k(t)),u^k(t),N(u^k(t)))dt\nonumber\\
&\qquad\qquad\mbox{}+J^{k+1}(X^k(\tau_{k+1}),\mathfrak{L}^k(X^{k}(\tau_{k+1})),\widetilde{u}^{k+1},\cdots,\widetilde{u}^n)\Big|\mathfrak{F}_{\tau_k}\Big]\nonumber\\
&=\mathop{\esssup} \limits_{u^{k}\in \mathcal{U}_k^{ad}}\mathbb{E}\Big[\int_{\tau_k}^{\tau_{k+1}}f^{k}(t,X^k(t),M(X^k(t)),u^k(t),N(u^k(t)))dt\nonumber\\
&\qquad\qquad\mbox{}+V^{k+1}(X^k(\tau_{k+1}),\mathfrak{L}^k(X^{k}(\tau_{k+1})))\Big|\mathfrak{F}_{\tau_k}\Big],
\end{align}
where $V^{n+1}(X^n(\tau_{n+1}),\mathfrak{L}^n(X^{n}(\tau_{n+1})))=g(X(T),M(X(T)))$. Here $(X^k,u^k)$ is the solution to the $(k+1)$-th equation of \eqref{SDE2}, $X^k(\tau_{k+1})$ is the terminal value of this equation and $V^k(X^{k-1}(\tau_k),\mathfrak{L}^{k-1}(X^{k-1}(\tau_k)))$ is an $\mathfrak{F}_{\tau_k}$-measurable random variable.
Thus, we can set
\begin{align*}
\mathbb{V}^k(X^{k-1}(\tau_k),\mathfrak{L}^{k-1}(X^{k-1}(\tau_k)))&=J^{k}(X^{k-1}(\tau_k),\mathfrak{L}^{k-1}(X^{k-1}(\tau_k)),\widehat{u}^k,\widehat{u}^{k+1},\cdots,\widehat{u}^n)\\
&=\mathop{\esssup} \limits_{({u}^{k},\cdots,{u}^n)\in \mathcal{U}_k^{ad}\times\cdots\times\mathcal{U}_n^{ad}}J^{k}(X^{k-1}(\tau_k),\mathfrak{L}^{k-1}(X^{k-1}(\tau_k)),{u}^k,{u}^{k+1},\cdots,{u}^n).
\end{align*}
We now aim to prove that
\begin{equation}\label{2}
V^k(X^{k-1}(\tau_k),\mathfrak{L}^{k-1}(X^{k-1}(\tau_k)))=\mathbb{V}^k(X^{k-1}(\tau_k),\mathfrak{L}^{k-1}(X^{k-1}(\tau_k))), \quad k\in\mathcal{I}_0.
\end{equation}
Trivially, equation \eqref{2} holds for $k=n$. By Proposition \ref{prop perfun}, for any $ ({u}^{n-1},{u}^n) \in \mathcal{U}_{n-1}^{ad}\times\mathcal{U}_n^{ad} $ with $k\in\mathcal{I}$, we have
\begin{align*}
\mathbb{V}^{n-1}(X^{n-2}(\tau_{n-1}),\mathfrak{L}^{n-2}(X^{n-2}(\tau_{n-1}))&\geq J^{n-1}(X^{n-2}(\tau_{n-1}),\mathfrak{L}^{n-2}(X^{n-2}(\tau_{n-1}),{u}^{n-1},u^n)\\
&=\mathbb{E}\Big[\int_{\tau_{n-1}}^{\tau_{n}}f^{n-1}(t,X^{n-1}(t),M(X^{n-1}(t)),u^{n-1}(t),N(u^{n-1}(t)))dt\\
&\qquad\mbox{}+J^{n}(X^{n-1}(\tau_{n}),\mathfrak{L}^{n-1}(X^{n-1}(\tau_{n}),u^n))\Big|\mathfrak{F}_{\tau_{n-1}}\Big].
\end{align*}
Taking ${u}^n=\widetilde{u}^n$, one has
\begin{align}\label{3}
\mathbb{V}^{n-1}(X^{n-2}(\tau_{n-1}),\mathfrak{L}^{n-2}(X^{n-2}(\tau_{n-1}))
&\geq\mathbb{E}\Big[\int_{\tau_{n-1}}^{\tau_{n}}f^{n-1}(t,X^{n-1}(t),M(X^{n-1}(t)),u^{n-1}(t),N(u^{n-1}(t)))dt\nonumber\\
&\qquad\mbox{}+V^{n}(X^{n-1}(\tau_{n}),\mathfrak{L}^{n-1}(X^{n-1}(\tau_{n}))\Big|\mathfrak{F}_{\tau_{n-1}}\Big]\nonumber\\
&=V^{n-1}(X^{n-2}(\tau_{n-1}),\mathfrak{L}^{n-2}(X^{n-2}(\tau_{n-1})).
\end{align}
Recalling that the admissible control set is a separable metric space,  the measurable selection result (see,  for example,  \cite{Pham2018Bellman, Wagner1980Survey, yong2019Stochastic}) indicates that for arbitrary $\varepsilon>0$, there exist $\varepsilon$-optimal controls $u^{\varepsilon,n-1}\in \mathcal{U}_{n-1}^{ad}$ of $\mathbb{V}^{n-1}(X^{n-2}(\tau_{n-1}),\mathfrak{L}^{n-2}(X^{n-2}(\tau_{n-1}))$ and $u^{\varepsilon,n}\in \mathcal{U}_{n}^{ad}$ of $\mathbb{V}^{n}(X^{n-1}(\tau_{n}),\mathfrak{L}^{n-1}(X^{n-1}(\tau_{n}))$
such that
\begin{align*}
&\quad\; \mathbb{V}^{n-1}(X^{n-2}(\tau_{n-1}),\mathfrak{L}^{n-2}(X^{n-2}(\tau_{n-1}))-\varepsilon\\
&<\mathbb{E}\Big[\int_{\tau_{n-1}}^{\tau_{n}}f^{n-1}(t,X^{\varepsilon,n-1}(t),M(X^{\varepsilon,n-1}(t)),u^{\varepsilon,n-1}(t),N(u^{\varepsilon,n-1}(t)))dt\\
&\qquad\mbox{}+J^{n}(X^{\varepsilon,n-1}(\tau_{n}),\mathfrak{L}^{n-1}(X^{\varepsilon,n-1}(\tau_{n})),u^{\varepsilon,n})\Big|\mathfrak{F}_{\tau_{n-1}}\Big]\\
&\leq\mathbb{E}\Big[\int_{\tau_{n-1}}^{\tau_{n}}f^{n-1}(t,X^{\varepsilon,n-1}(t),M(X^{\varepsilon,n-1}(t)),u^{\varepsilon,n-1}(t),N(u^{\varepsilon,n-1}(t)))dt\\
&\qquad\mbox{}+V^{n}(X^{\varepsilon,n-1}(\tau_{n}),\mathfrak{L}^{n-1}(X^{\varepsilon,n-1}(\tau_{n})))\Big|\mathfrak{F}_{\tau_{n-1}}\Big]\\
&\leq V^{n-1}(X^{n-2}(\tau_{n-1}),\mathfrak{L}^{n-2}(X^{n-2}(\tau_{n-1}))) \quad\mbox{a.s.}.
\end{align*}
Taking $\varepsilon\rightarrow 0$, one has
\begin{equation}\label{4}
\mathbb{V}^{n-1}(X^{n-2}(\tau_{n-1}),\mathfrak{L}^{n-2}(X^{n-2}(\tau_{n-1})))\leq V^{n-1}(X^{n-2}(\tau_{n-1})),\mathfrak{L}^{n-2}(X^{n-2}(\tau_{n-1}))).
\end{equation}
Combining \eqref{3} and \eqref{4}, we can obtain
$$\mathbb{V}^{n-1}(X^{n-2}(\tau_{n-1}),\mathfrak{L}^{n-2}(X^{n-2}(\tau_{n-1})))= V^{n-1}(X^{n-2}(\tau_{n-1}),\mathfrak{L}^{n-2}(X^{n-2}(\tau_{n-1}))).$$
Then \eqref{2} is easily derived by using the backward induction.
Furthermore,
$$V^0(x_0,\mathfrak{L}(x_0))=\mathbb{V}^0(x_0,\mathfrak{L}(x_0))=V(x_0,\mathfrak{L}(x_0)), \quad \text{ $k=0$. }$$
This completes the proof.
\end{proof}

Theorem \ref{multi} shows that $(\widetilde{u}^0,\cdots,\widetilde{u}^n)$ obtained by \eqref{1} is the global optimal control.  Thus, solving  problem  \eqref{pb} is equivalent to solve the following subproblem:

\begin{subprob}\label{spb}
Find $\widetilde{u}^k\in \mathcal{U}_k^{ad}$ such that
$$
J^{k}(X^{k-1}(\tau_k),\mathfrak{L}^{k-1}(X^{k-1}(\tau_k)),\widetilde{u}^k)=\mathop{\esssup} \limits_{u^{k}\in \mathcal{U}_k^{ad}}J^{k}(X^{k-1}(\tau_k),\mathfrak{L}^{k-1}(X^{k-1}(\tau_k)),{u}^k), \quad k\in\mathcal{I}_0.
$$
\end{subprob}

\subsection{A sufficient maximum principle}

In this subsection, we derive the sufficient version of the maximum principle for Subproblem \ref{spb}.  The following lemma plays an important role in optimal controls problems of MFSDEs with random coefficients.

\begin{lemma}\label{lm}
For any $X(\cdot),Y(\cdot)\in L_{T}^{\mathfrak{F}}$ and $k\in \mathcal{I}_0$, we have $M(X(\cdot)),M(Y(\cdot))\in L_{T}^{\mathfrak{F}}$ and
$$
|M(X(t))-M(Y(t))|^2\leq \mathbb{E}\left[|X(t)-Y(t)|^2\Big|\mathcal{M}^k_{t}\right].
$$
Moreover, for $t\in[\tau_k,\tau_{k+1}]$ with $k\in \mathcal{I}_0$,
\begin{align*}
\mathbb{E}\left[M(X(t))Y(t)\Big|\mathfrak{F}_{\tau_k}\right]&=\mathbb{E}\left[X(t)M(Y(t))\Big|\mathfrak{F}_{\tau_k}\right],\\
\mathbb{E}\left[\gamma^{\mathfrak{F}^{k+1}}(t)M(X(t))Y(t)\Big|\mathfrak{F}_{\tau_k}\right]&=\mathbb{E}\left[X(t)M(\gamma^{\mathfrak{F}^{k+1}}(t)Y(t))\Big|\mathfrak{F}_{\tau_k}\right].
\end{align*}
\end{lemma}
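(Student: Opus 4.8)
The plan is to handle the three assertions separately, in each case reducing everything to the elementary properties of conditional expectation. For the membership and Lipschitz-type estimate, fix $k\in\mathcal{I}_0$ and recall that on the event $\{t\in[\tau_k,\tau_{k+1})\}$ (and similarly on $\{t\in[\tau_n,T]\}$) the term $M(X(t))$ is just $\mathbb{E}[X(t)\mid\mathcal{M}^k_t]$. By the conditional Jensen inequality applied to the convex function $x\mapsto x^2$, we have $|M(X(t))-M(Y(t))|^2=|\mathbb{E}[X(t)-Y(t)\mid\mathcal{M}^k_t]|^2\le\mathbb{E}[|X(t)-Y(t)|^2\mid\mathcal{M}^k_t]$ pointwise on that event, which is exactly the claimed inequality; integrating over $[0,T]$ and taking expectations, together with the tower property, gives $\|M(X)\|^2_{L^{\mathfrak{F}}_T}\le\|X\|^2_{L^{\mathfrak{F}}_T}<\infty$, and since $M(X(t))$ is built from conditional expectations of an $\mathcal{O}(\mathfrak{F})$-measurable process against the subfiltrations $\mathcal{M}^k$, it is itself $\mathcal{O}(\mathfrak{F})$-measurable, so $M(X(t))\in L^{\mathfrak{F}}_T$ (and likewise for $Y$).

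For the two "self-adjointness" identities, I would work on a fixed interval $[\tau_k,\tau_{k+1}]$ so that $M(\cdot)$ there equals $\mathbb{E}[\cdot\mid\mathcal{M}^k_t]$, and use the key structural fact recorded in the Definition that $\mathcal{M}^k_{\tau_k}=\mathfrak{F}_{\tau_k}$, hence $\mathfrak{F}_{\tau_k}\subseteq\mathcal{M}^k_t$ for $t\ge\tau_k$. Then, conditioning first on $\mathcal{M}^k_t$ and then on $\mathfrak{F}_{\tau_k}$ (tower property),
\begin{align*}
\mathbb{E}\big[M(X(t))Y(t)\,\big|\,\mathfrak{F}_{\tau_k}\big]
&=\mathbb{E}\big[\,\mathbb{E}[X(t)\mid\mathcal{M}^k_t]\,Y(t)\,\big|\,\mathfrak{F}_{\tau_k}\big]\\
&=\mathbb{E}\big[\,\mathbb{E}[\,\mathbb{E}[X(t)\mid\mathcal{M}^k_t]\,Y(t)\mid\mathcal{M}^k_t\,]\,\big|\,\mathfrak{F}_{\tau_k}\big]\\
&=\mathbb{E}\big[\,\mathbb{E}[X(t)\mid\mathcal{M}^k_t]\,\mathbb{E}[Y(t)\mid\mathcal{M}^k_t]\,\big|\,\mathfrak{F}_{\tau_k}\big],
\end{align*}
where in the last line I pulled out the $\mathcal{M}^k_t$-measurable factor $\mathbb{E}[X(t)\mid\mathcal{M}^k_t]$. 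The resulting expression is symmetric in $X$ and $Y$, so running the same computation starting from $\mathbb{E}[X(t)M(Y(t))\mid\mathfrak{F}_{\tau_k}]$ produces the same thing, which gives the first identity. The second identity is the same argument with $Y(t)$ replaced by $\gamma^{\mathfrak{F}^{k+1}}(t)Y(t)$: since $\gamma^{\mathfrak{F}^{k+1}}$ is $\mathfrak{F}$-predictable and bounded (Assumption 2.1(ii)), the product $\gamma^{\mathfrak{F}^{k+1}}(t)Y(t)$ is again in $L^{\mathfrak{F}}_T$ on the interval, and $M(\gamma^{\mathfrak{F}^{k+1}}(t)Y(t))=\mathbb{E}[\gamma^{\mathfrak{F}^{k+1}}(t)Y(t)\mid\mathcal{M}^k_t]$, so the displayed chain applies verbatim.

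The only genuinely delicate point—and the step I would be most careful about—is integrability: before writing $\mathbb{E}[M(X(t))Y(t)\mid\mathfrak{F}_{\tau_k}]$ I must know $M(X(t))Y(t)$ is integrable so that the conditional expectation and the pull-out step are legitimate. This follows from Cauchy–Schwarz together with the $L^2$ bound on $M(X(t))$ from the first part (and the boundedness of $\gamma^{\mathfrak{F}^{k+1}}$ in the second identity), after localizing to the stochastic interval $[\tau_k,\tau_{k+1}]$; one should also note that the indicator $\mathbb{I}_{t\in[\tau_k,\tau_{k+1})}$ is $\mathcal{M}^k_t$-measurable (indeed $\mathfrak{F}_t$-measurable and, by the subfiltration hypothesis, compatible with conditioning on $\mathcal{M}^k_t$), which is what makes the piecewise definition of $M(\cdot)$ behave well under the conditioning. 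Modulo these bookkeeping remarks the proof is a direct application of the tower property and the pull-out rule, so I would keep it short.
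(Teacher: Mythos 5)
Your proposal is correct and follows essentially the same route as the paper: conditional Jensen/Cauchy--Schwarz for the pointwise estimate, and then the tower property (using $\mathfrak{F}_{\tau_k}=\mathcal{M}^k_{\tau_k}\subseteq\mathcal{M}^k_t$) together with pulling out the $\mathcal{M}^k_t$-measurable factor to obtain a symmetric expression, with the second identity obtained by replacing $Y(t)$ with $\gamma^{\mathfrak{F}^{k+1}}(t)Y(t)$. Your additional remarks on integrability and measurability are sound bookkeeping that the paper leaves implicit.
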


\begin{proof}
The first inequality follows from Cauchy's inequality immediately. Noticing that $\mathcal{M}^k_{\tau_k}=\mathfrak{F}_{\tau_k}$, one has
\begin{align*}
\mathbb{E}\left[M(X(t))Y(t)\Big|\mathfrak{F}_{\tau_k}\right]&=\mathbb{E}\left[\mathbb{E}\left[\mathbb{E}\left[X(t)\Big|\mathcal{M}^k_{t}\right]\cdot Y(t)\Big|\mathcal{M}^k_{t}\right]\Big|\mathfrak{F}_{\tau_k}\right]\\
&=\mathbb{E}\left[\mathbb{E}\left[X(t)\Big|\mathcal{M}^k_{t}\right]\cdot\mathbb{E}\left[Y(t)\Big|\mathcal{M}^k_{t}\right]\Big|\mathfrak{F}_{\tau_k}\right]\\
&=\mathbb{E}\left[\mathbb{E}\left[\mathbb{E}\left[Y(t)\Big|\mathcal{M}^k_{t}\right]\cdot X(t)\Big|\mathcal{M}^k_{t}\right]\Big|\mathfrak{F}_{\tau_k}\right]\\
&=\mathbb{E}\left[X(t)M(Y(t))\Big|\mathfrak{F}_{\tau_k}\right].
\end{align*}
The second equality follows similarly.
\end{proof}

We also need to define following Hamiltonian functional:
\begin{align}\label{5}
H^k&:[0,T]\times \Omega\times\mathbb{R}\times\mathcal{P}_2(\mathbb{R})\times\mathbb{R}\times\mathcal{P}_2(\mathbb{R})\times\mathbb{R}\times\mathbb{R}\times\mathbb{R}\rightarrow \mathbb{R}\nonumber\\
H^k(t)&=H^k(t,x^k,M^k,u^k,N^k,p^k,q^k,r^k)\nonumber\\
&=f^k(t,x^k,M^k,u^k,N^k)+b^k_{\gamma}(t,x^k,M^k,u^k,N^k)p^k+\sigma^k(t,x^k,M^k,u^k,N^k)q^k\nonumber\\
&\quad\mbox{}+\gamma^{\mathfrak{F}^{k+1}}h(t,x^k,M^k,u^k,N^k)r^k.
\end{align}

In the sequel, we assume that the functions $b^k$, $\sigma^k$, $\gamma^k$ and $f^k$ all admit bounded Fr\'{e}chet derivatives with respect to $x^k$, $M^k$, $u^k$ and $N^k$, respectively. In addition, $V^{k+1}(X^k(\tau_{k+1}),\mathfrak{L}^{k}(X^k(\tau_{k+1})))$ admits bounded Fr\'{e}chet derivatives with respect to $X^k(\tau_{k+1})$ and $\mathfrak{L}^{k}(X^k(\tau_{k+1}))$.

Now we associate the adjoint BSDE to Hamiltonian \eqref{5} in the unknown triple $\left(p^k,q^k,r^k\right)$ ($k\in\mathcal{I}_0$) satisfying
\begin{equation*}
\begin{cases}
dp^k(t)=-\left(\frac{\partial H^k}{\partial x^k}(t)+M(\nabla_{M}H^k(t))\right)dt+q^k(t)dB_t+r^k(t)dA^{k+1}_t,\quad t\in[\tau_k,\tau_{k+1}],\\
p^k(\tau_{k+1})=\frac{\partial V^{k+1}}{\partial x^k}(X^k(\tau_{k+1}),\mathfrak{L}^{k}(X^k(\tau_{k+1})))+\nabla_{\mathfrak{L}^k}V^{k+1}(X^k(\tau_{k+1}),\mathfrak{L}^{k}(X^k(\tau_{k+1}))),
\end{cases}
\end{equation*}
where $M(\nabla_{M}H^k(t))=\mathbb{E}\left[\nabla_{M}H^k(t)\Big|\mathcal{M}^k_t\right]$.

\begin{example}
If the state process satisfies ($k\in \mathcal{I}_0$)
\begin{equation*}
\begin{cases}
dX^{k}(t)=u^k(t)dt+M(X^{k}(t))d\mathbb{H}^{k+1}_t, \quad t\in(\tau_k,\tau_{k+1}],\\
X^{k}(\tau_k)=X^{k-1}(\tau_k)
\end{cases}
\end{equation*}
and the performance functional takes the form
$$
J(X^k,u^k)=-\frac{1}{2}\mathbb{E}\left[\int_{\tau_{k}}^{\tau_{k+1}}|X^k(t)|^2+|u^k(t)|^2dt+|X^k(\tau_{k+1})|^2\right],
$$
then the Hamiltonian functional is
$$
H^k=-\frac{1}{2}\left[|X^k|^2+|u^k|^2\right]+(u^k+M(X^k)\gamma^{\mathfrak{F}^{k+1}})p+\gamma^{\mathfrak{F}^{k+1}}M(X^k)r^k
$$
and the adjoint BSDE is
\begin{equation*}
\begin{cases}
dp^k(t)=-\left(X^k(t)-\mathbb{E}\left[\gamma^{\mathfrak{F}^{k+1}}(t)p^k(t)\Big|\mathcal{M}^k_{t}\right]-\mathbb{E}\left[\gamma^{\mathfrak{F}^{k+1}}(t)r^k(t)\Big|\mathcal{M}^k_{t}\right]\right)dt\\
\qquad\qquad+r^k(t)dA^{k+1}_t,\quad t\in[\tau_k,\tau_{k+1}],\\
p^k(\tau_{k+1})=-X^k(\tau_{k+1}).
\end{cases}
\end{equation*}
\end{example}

\begin{thm}\label{sufficient}
Assume that
\begin{itemize}
\item[(i)] (Concavity) For $t\in [\tau_k,\tau_{k+1}]$ with $k\in \mathcal{I}_0$, the functions
\begin{align*}
(x^k,M^k,u^k,N^k)&\rightarrow H(t,x^k,M^k,u^k,N^k,\widehat{p}^k,\widehat{q}^k,\widehat{r}^k),\\
(X^k(\tau_{k+1}),\mathfrak{L}^{k}(X^k(\tau_{k+1})))&\rightarrow V^{k+1}(X^k(\tau_{k+1}),\mathfrak{L}^{k}(X^k(\tau_{k+1})))
\end{align*}
are concave a.s..
\item[(ii)] (Maximum condition) For $t\in [\tau_k,\tau_{k+1}]$ with $k\in \mathcal{I}_0$,
$$
\mathbb{E}\left[\widehat{H}(t,\widehat{x}^k,\widehat{M}^k,\widehat{u}^k,\widehat{N}^k,\widehat{p}^k,\widehat{q}^k,\widehat{r}^k)\Big|\mathcal{G}^k_t\right]=\mathop{\esssup}  \limits_{u^k\in \mathcal{U}_k^{ad}}\mathbb{E}\left[\widehat{H}(t,\widehat{x}^k,\widehat{M}^k,{u^k},N^k,\widehat{p}^k,\widehat{q}^k,\widehat{r}^k)\Big|\mathcal{S}^k_t\right].
$$
\end{itemize}
Then, $\widehat{u}^k$ is an optimal control of Subproblem \ref{spb}.
\end{thm}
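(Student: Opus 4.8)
The plan is to show that, under hypotheses (i)--(ii), the candidate $\widehat{u}^k$ is optimal for Subproblem \ref{spb} for every $k\in\mathcal{I}_0$; by Theorem \ref{multi} this makes $\widehat{u}$ globally optimal for Problem \ref{pb}. So fix $k$, fix an arbitrary $u^k\in\mathcal{U}_k^{ad}$, let $X^k$ (resp.\ $\widehat{X}^k$) solve \eqref{SDE2} with control $u^k$ (resp.\ $\widehat{u}^k$) from the common initial value $X^{k-1}(\tau_k)$, let $(\widehat{p}^k,\widehat{q}^k,\widehat{r}^k)$ be the solution of the adjoint BSDE along $(\widehat{X}^k,\widehat{u}^k)$, and put $\Delta X:=\widehat{X}^k-X^k$, so that $\Delta X(\tau_k)=0$. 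Using the recursive representation of the cost obtained in the proof of Theorem \ref{multi}, $J^{k}(X^{k-1}(\tau_k),v)=\mathbb{E}[\int_{\tau_k}^{\tau_{k+1}}f^{k}(t,X^k,M(X^k),v,N(v))dt+V^{k+1}(X^k(\tau_{k+1}),M(X^k(\tau_{k+1})))\,|\,\mathfrak{F}_{\tau_k}]$, I would write
$$J^{k}(X^{k-1}(\tau_k),\widehat{u}^k)-J^{k}(X^{k-1}(\tau_k),u^k)=\mathbb{E}\Big[\int_{\tau_k}^{\tau_{k+1}}(\widehat{f}^{k}-f^{k})dt\,\Big|\,\mathfrak{F}_{\tau_k}\Big]+\mathbb{E}\big[\widehat{V}^{k+1}-V^{k+1}\,\big|\,\mathfrak{F}_{\tau_k}\big]=:I_1+I_2,$$
a hat marking evaluation along $(\widehat{X}^k,\widehat{u}^k)$ and the unhatted terms being along $(X^k,u^k)$.

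For $I_2$ I would use concavity of $V^{k+1}$ in $(x^k,M^k)$ to bound $\widehat{V}^{k+1}-V^{k+1}$ below by $\frac{\partial V^{k+1}}{\partial x^k}(\widehat{X}^k(\tau_{k+1}))\Delta X(\tau_{k+1})+\nabla_M V^{k+1}(\widehat{\cdot})M(\Delta X(\tau_{k+1}))$, then take $\mathbb{E}[\,\cdot\,|\,\mathfrak{F}_{\tau_k}]$, convert the mean-field term via Lemma \ref{lm}, and recognise the terminal condition of the adjoint BSDE to get $I_2\ge\mathbb{E}[\widehat{p}^k(\tau_{k+1})\Delta X(\tau_{k+1})\,|\,\mathfrak{F}_{\tau_k}]$. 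Next I would apply the product rule of Lemma \ref{ito} to $\widehat{p}^k(t)\Delta X(t)$ on $[\tau_k,\tau_{k+1}]$ (the boundary term at $\tau_k$ vanishes) and take $\mathbb{E}[\,\cdot\,|\,\mathfrak{F}_{\tau_k}]$: the $dB_t$ and $dA^{k+1}_t$ integrals drop out (they are true martingales by the square-integrability of the coefficients, the bounded Fr\'{e}chet derivatives, and the boundedness of $\gamma^{\mathfrak{F}^{k+1}}$), while $d\mathbb{H}^{k+1}_t=\gamma^{\mathfrak{F}^{k+1}}dt+dA^{k+1}_t$ turns the bracket term into a $dt$-integral. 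Reading $\widehat{b}^{k}_{\gamma}-b^{k}_{\gamma}$, $\widehat{\sigma}^{k}-\sigma^{k}$, $\widehat{h}^{k}-h^{k}$ out of $H^k$ via Definition \ref{+4}, this yields
$$I_1+I_2\ \ge\ \mathbb{E}\Big[\int_{\tau_k}^{\tau_{k+1}}\Big\{(\widehat{H}^{k}-H^{k})-\Delta X\Big(\tfrac{\partial H^k}{\partial x^k}+M(\nabla_M H^k)\Big)\Big\}dt\,\Big|\,\mathfrak{F}_{\tau_k}\Big],$$
where $\widehat{H}^k$ and $H^k$ are the Hamiltonian evaluated along the optimal and the perturbed data respectively (with the \emph{same} adjoint triple $\widehat{p}^k,\widehat{q}^k,\widehat{r}^k$), and the partial derivatives are taken at the optimal point exactly as in the adjoint BSDE.

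I would then invoke the joint concavity of $H$ in $(x^k,M^k,u^k,N^k)$, which gives $\widehat{H}^{k}-H^{k}\ge \tfrac{\partial H^k}{\partial x^k}\Delta X+\nabla_M H^k\,M(\Delta X)+\tfrac{\partial H^k}{\partial u^k}(\widehat{u}^k-u^k)+\nabla_N H^k\,N(\widehat{u}^k-u^k)$. Substituting, the $\partial_{x^k}$-term cancels, and since $M(\cdot)=N(\cdot)=\mathbb{E}[\,\cdot\,|\,\mathcal{M}^k_t]$ on $[\tau_k,\tau_{k+1})$, Lemma \ref{lm} turns $\mathbb{E}[\nabla_M H^k\,M(\Delta X)\,|\,\mathfrak{F}_{\tau_k}]$ into $\mathbb{E}[M(\nabla_M H^k)\,\Delta X\,|\,\mathfrak{F}_{\tau_k}]$, cancelling the remaining state term, and likewise rewrites the $N$-term; one is left with $J^{k}(X^{k-1}(\tau_k),\widehat{u}^k)-J^{k}(X^{k-1}(\tau_k),u^k)\ge\mathbb{E}[\int_{\tau_k}^{\tau_{k+1}}(\tfrac{\partial H^k}{\partial u^k}(\widehat{u}^k-u^k)+\nabla_N H^k\,N(\widehat{u}^k-u^k))dt\,|\,\mathfrak{F}_{\tau_k}]$. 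Finally, the maximum condition (ii) says that for each $t\in[\tau_k,\tau_{k+1}]$ the map $u\mapsto\mathbb{E}[\widehat{H}(t,\widehat{X}^k,M(\widehat{X}^k),u,N(u),\widehat{p}^k,\widehat{q}^k,\widehat{r}^k)\,|\,\mathcal{G}^k_t]$ is maximised over $\mathcal{U}_k^{ad}$ at $\widehat{u}^k$; differentiating along the admissible ray $\widehat{u}^k+\theta(u^k-\widehat{u}^k)$ (using convexity, or star-shapedness at $\widehat{u}^k$, of $\mathcal{U}_k^{ad}$, the linearity of $N$, and dominated convergence to differentiate under $\mathbb{E}[\,\cdot\,|\,\mathcal{G}^k_t]$) yields $\mathbb{E}[\tfrac{\partial H^k}{\partial u^k}(\widehat{u}^k-u^k)+\nabla_N H^k\,N(\widehat{u}^k-u^k)\,|\,\mathcal{G}^k_t]\ge0$; since $\mathcal{G}^k$ is increasing with $\mathcal{G}^k_{\tau_k}=\mathfrak{F}_{\tau_k}$, so $\mathfrak{F}_{\tau_k}\subseteq\mathcal{G}^k_t$ on $[\tau_k,\tau_{k+1}]$, the tower property forces the last conditional expectation to be nonnegative. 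Hence $J^{k}(X^{k-1}(\tau_k),\widehat{u}^k)\ge J^{k}(X^{k-1}(\tau_k),u^k)$ a.s.\ for every $u^k$, so $\widehat{u}^k$ solves Subproblem \ref{spb}, and Theorem \ref{multi} completes the proof. The routine parts are the It\^{o} bookkeeping and the integrability needed to discard the stochastic integrals; the genuinely delicate step is the last one --- extracting the variational inequality from the maximum condition in this conditional mean-field framework while keeping the three conditionings $\mathfrak{F}_{\tau_k}\subseteq\mathcal{G}^k_t$ and $M=N=\mathbb{E}[\,\cdot\,|\,\mathcal{M}^k_t]$ straight.
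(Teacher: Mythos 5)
Your proposal is correct and follows essentially the same route as the paper: the same decomposition $I_1+I_2$, concavity of $V^{k+1}$ combined with the adjoint terminal condition and It\^{o}'s product rule, concavity of $H^k$ together with Lemma \ref{lm} to swap $M(\cdot)$ across the conditional expectation, and finally the maximum condition, differing only in sign conventions. The one place you go beyond the paper is the last step, where you explicitly extract the first-order variational inequality from condition (ii) via a convexity/star-shapedness argument on $\mathcal{U}_k^{ad}$ and the inclusion $\mathfrak{F}_{\tau_k}\subseteq\mathcal{G}^k_t$ --- the paper simply asserts the conclusion there, so this added care is welcome rather than a deviation.
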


\begin{proof}
Let
\begin{equation*}
J^k(X^{k-1}(\tau_k),L(X^{k-1}(\tau_k)),u^k)-J^k(\widehat{X}^{k-1}(\tau_k),L(\widehat{X}^{k-1}(\tau_k)),\widehat{u}^k)=I_1+I_2, \quad k\in \mathcal{I}_0,
\end{equation*}
where
\begin{align*}
I_1&=\mathbb{E}\bigg[\int_{\tau_k}^{\tau_{k+1}}f^k(t,X^k(t),M(X^k(t)),u^k(t),N(u^k(t)))-f^k(t,\widehat{X}^k(t),M(\widehat{X}^k(t)),\widehat{u}^k(t),\widehat{N}(u^k(t)))dt\Big|\mathfrak{F}_{\tau_k}\bigg],\\
I_2&=\mathbb{E}\left[V^{k+1}(X^k(\tau_{k+1}),\mathfrak{L}^{k}(X^k(\tau_{k+1})))-V^{k+1}(\widehat{X}^k(\tau_{k+1}),\mathfrak{L}^{k}(\widehat{X}^k(\tau_{k+1})))\Big|\mathfrak{F}_{\tau_k}\right].
\end{align*}
Setting $\widetilde{X}^k(\tau_{k+1})=X^k(\tau_{k+1}))-\widehat{X}^k(\tau_{k+1})$ and applying Lemma \ref{lm}, one has
$$
\mathbb{E}\left[\int_{\tau_k}^{\tau_{k+1}}\langle\nabla_{M}H^k(t),M(\widetilde{X}^k(t))\rangle dt\Big|\mathfrak{F}_{\tau_k}\right]=\mathbb{E}\left[\int_{\tau_k}^{\tau_{k+1}} M(\nabla_{M}H^k(t))\widetilde{X}^k(t)dt\Big|\mathfrak{F}_{\tau_k}\right].
$$
Thus, by It\^{o}'s formula and the martingale property of $\int_{\tau_k}^{\tau_{k+1}}\widehat{r}^k(t)\widetilde{h}^k(t)dA^{k+1}(t)$, we have
\begin{align*}
I_2&\leq\mathbb{E}\left[\{\frac{\partial V^{k+1}}{\partial x^k}(X^k(\tau_{k+1}),\mathfrak{L}^{k}(X^k(\tau_{k+1})))+\nabla_{\mathfrak{L}^k}V^{k+1}(X^k(\tau_{k+1}),\mathfrak{L}^{k}(X^k(\tau_{k+1})))\}\cdot\widetilde{X}^k(\tau_{k+1})\Big|\mathfrak{F}_{\tau_k}\right]\\
&=\mathbb{E}\left[\widehat{p}^k(\tau_{k+1})\widetilde{X}^k(\tau_{k+1})\Big|\mathfrak{F}_{\tau_k}\right]=\mathbb{E}\left[\widehat{p}^k(\tau_{k+1})\widetilde{X}^k(\tau_{k+1})-\widehat{p}^k(\tau_{k})\widetilde{X}(\tau_{k})\Big|\mathfrak{F}_{\tau_k}\right]\\
&=\mathbb{E}\left[\int_{\tau_k}^{\tau_{k+1}}\widehat{p}^k(t)d\widetilde{X}^k(t)+\widetilde{X}^k(t)d\widehat{p}^k(t)+\widehat{q}^k(t)\widetilde{\sigma}^k(t)dt+\widehat{r}^k(t)\widetilde{h}^k(t)d\mathbb{H}^{k+1}_t\Big|\mathfrak{F}_{\tau_k}\right]\\
&=\mathbb{E}\Big[\int_{\tau_k}^{\tau_{k+1}}\widehat{p}^k(t)\widetilde{b}^k_{\gamma}(t)dt-\widetilde{X}^k(t)\frac{\partial H^k}{\partial x^k}(t)-\langle\nabla_{M}H^k(t),M(\widetilde{X}^k(t))\rangle\\
&\quad\mbox{}+\widehat{q}^k(t)\widetilde{\sigma}^k(t)+\widehat{r}^k(t)\widetilde{h}^k(t)\gamma^{\mathfrak{F}_k}(t)dt\Big|\mathfrak{F}_{\tau_k}\Big].
\end{align*}
It follows from \eqref{5} that
$$
I_1=\mathbb{E}\left[\int_{\tau_k}^{\tau_{k+1}}\widetilde{H}^k(t)-\widehat{p}^k(t)\widetilde{b}^k_{\gamma}(t)-\widehat{q}^k(t)\widetilde{\sigma}^k(t)-\widehat{r}^k(t)\widetilde{h}^k(t)\gamma^{\mathfrak{F}^{k+1}}(t)dt\Big|\mathfrak{F}_{\tau_k}\right].
$$
Since $\mathcal{S}_{\tau_k}=\mathfrak{F}_{\tau_k}$,  the concave condition imposed on $H^k(t)$ leads to
\begin{align*}
I_1+I_2&= J^k(X^{k-1}(\tau_k),L(X^{k-1}(\tau_k)),u^k)-J^k(\widehat{X}^{k-1}(\tau_k),L(\widehat{X}^{k-1}(\tau_k)),\widehat{u}^k)\\
&\leq\mathbb{E}\left[\int_{\tau_k}^{\tau_{k+1}}\widetilde{H}^k(t)-\widetilde{X}^k(t)\frac{\partial H^k}{\partial x^k}(t)-\langle\nabla_{M}H^k(t),M(\widetilde{X}^k(t))\rangle dt\Big|\mathfrak{F}_{\tau_k}\right]\\
&\leq\mathbb{E}\left[\int_{\tau_k}^{\tau_{k+1}}\widetilde{u}^k(t)\frac{\partial H^k}{\partial u^k}(t)+\langle N(\widetilde{u}^k(t)),\nabla_{N}H^k(t)\rangle dt\Big|\mathfrak{F}_{\tau_k}\right]\\
&=\mathbb{E}\left[\int_{\tau_k}^{\tau_{k+1}}\mathbb{E}\left[\left(\widetilde{u}^k(t)\frac{\partial H^k}{\partial u^k}(t)+\widetilde{u}^k(t)M(\nabla_{N}H^k(t))\right)\mathbb{I}_{\tau_k\leq t<\tau_{k+1}}\Big|\mathcal{S}^k_t\right]dt\Big|\mathfrak{F}_{\tau_k}\right]\\
&=\mathbb{E}\left[\int_{\tau_k}^{\tau_{k+1}}\mathbb{E}\left[\left(\frac{\partial H^k}{\partial u^k}(t)+M(\nabla_{N}H^k(t))\right)\mathbb{I}_{\tau_k\leq t<\tau_{k+1}}\Big|\mathcal{S}^k_t\right]\widetilde{u}^k(t)dt\Big|\mathfrak{F}_{\tau_k}\right].
\end{align*}
This implies that $\widehat{u}^k$ is the optimal control to Subproblem \ref{spb}.
\end{proof}

\subsection{A Necessary Maximum Principle}
We now proceed to study the necessary version of the maximum principle for subproblems.
\begin{assumption}\label{ass nece}
For each $t_0\in[\tau_k,\tau_{k+1}]$ with $k\in \mathcal{I}_0$ and all bounded $\mathcal{S}_{t_0}$-measurable random variable $\eta$, the process
$\vartheta(t)=\eta\mathbb{I}_{[t_0,\tau_{k+1})}(t)$ belongs to $\mathcal{U}_k^{ad}$.
\end{assumption}
Thanking to the definition of $\mathcal{U}_k^{ad}$ with $k\in \mathcal{I}_0$, there exists a constant $\epsilon_0> 0$ such that
$$
u^k_{\epsilon}=\widehat{u}^k+\epsilon u^k\in \mathcal{U}_k^{ad},\;\epsilon\in[-\epsilon_0,\epsilon_0]
$$
for any $u^k\in \mathcal{U}_k^{ad}$ and bounded $\widehat{u}^k\in \mathcal{U}_k^{ad}$.
To simplify notations, for $\pi\in\{b^k_{\gamma},\sigma^k,h^k,f^k\}$ with $k\in \mathcal{I}_0$, we write
$$
(\nabla \pi(t))^T(x^k(t),u^k(t))=\frac{\partial \pi}{\partial x^k}(t)x^k(t)+\langle\nabla_M \pi(t),M(x^k(t))\rangle+\frac{\partial \pi}{\partial u^k}(t)u^k(t)+\langle\nabla_N \pi(t),N(u^k(t))\rangle.
$$

Next we consider the process $Z^k(t)$ obtained by differentiating $X^k_{\epsilon}(t)$ with respect to $\epsilon$ at $\epsilon=0$. Clearly, $Z^k(t)$ satisfies the following equation ($k\in \mathcal{I}_0$):
\begin{equation}\label{SDE nece}
\begin{cases}
dZ^k(t)=(\nabla b^k_{\gamma}(t))^T(Z^k(t),u^k(t))dt+(\nabla \sigma^k(t))^T(Z^k(t),u^k(t))dB_t\\
\qquad\qquad\;+(\nabla h^k(t))^T(Z^k(t),u^k(t))dA^{k+1}_t, \quad t\in[\tau_k,\tau_{k+1}].\\
Z^k(\tau_k)=0.
\end{cases}
\end{equation}

Then, the necessary maximum principle is given as follows.
\begin{thm}\label{thm nece}
Suppose that Assumption \ref{ass nece} holds. Then the following equalities are equivalent.
\begin{enumerate}[($\romannumeral1$)]
\item For all $u^k \in \mathcal{U}_k^{ad}$ with $k\in \mathcal{I}_0$,
\begin{equation}\label{nece1}
0=\frac{d}{d\epsilon}J(X^{k-1}(\tau_k),\mathfrak{L}^{k-1}(X^{k-1}(\tau_k)),\widehat{u}^k+\epsilon u^k)\Big|_{\epsilon=0}.
\end{equation}

\item For any $k\in \mathcal{I}_0$,
\begin{equation}\label{nece2}
0=\mathbb{E}\left[\frac{\partial H^k}{\partial u^k}(t)+M(\nabla_{N}H^k(t))\Big|\mathcal{S}^k_{t}\right]\Bigg|_{u^k=\widehat{u}^k},\quad \forall t\in(\tau_k,\tau_{k+1}].
\end{equation}
\end{enumerate}
\end{thm}

\begin{proof}
Assume \eqref{nece1} holds. Then
\begin{align}\label{7}
0&=\frac{d}{d\varepsilon}J(X^{k-1}(\tau_k),\mathfrak{L}^{k-1}(X^{k-1}(\tau_k)),\widehat{u}^k+\epsilon u^k)\Big|_{\epsilon=0}  \nonumber\\
&=\mathbb{E}\Big[\{\frac{\partial V^{k+1}}{\partial x^k}(X^k(\tau_{k+1}),\mathfrak{L}^{k}(X^k(\tau_{k+1})))+\nabla_{\mathfrak{L}^k}V^{k+1}(X^k(\tau_{k+1}),\mathfrak{L}^{k}(X^k(\tau_{k+1})))\}\cdot\widehat{Z}^k(\tau_{k+1})\nonumber\\
&+\int_{\tau_k}^{\tau_{k+1}}(\nabla f^k(t))^T(Z^k(t),u^k(t))dt\Big|\mathfrak{F}_{\tau_k}\Big],
\end{align}
where $\widehat{Z}^k(t)$ is the solution to \eqref{SDE nece}. By It\^{o}'s formula and \eqref{5}, we have
\begin{align}\label{6}
&\quad\,\mathbb{E}\left[\{\frac{\partial V^{k+1}}{\partial x^k}(X^k(\tau_{k+1}),\mathfrak{L}^{k}(X^k(\tau_{k+1})))+\nabla_{\mathfrak{L}^k}V^{k+1}(X^k(\tau_{k+1}),\mathfrak{L}^{k}(X^k(\tau_{k+1})))\}\cdot\widehat{Z}^k(\tau_{k+1})\Big|\mathfrak{F}_{\tau_k} \right]\nonumber\\
&=\mathbb{E}\left[\widehat{p}^k(\tau_{k+1})\widehat{Z}^k(\tau_{k+1})\Big|\mathfrak{F}_{\tau_k}\right]\nonumber\\
&=\mathbb{E}\left[\int_{\tau_k}^{\tau_{k+1}}\widehat{p}^k(t)d\widehat{Z}^k(t)+\widehat{Z}^k(t)d\widehat{p}^k(t)+d\langle\widehat{p}^k(t),\widehat{Z}^k(t)\rangle\Big|\mathfrak{F}_{\tau_k} \right]\nonumber\\
&=\mathbb{E}\Bigg[\int_{\tau_k}^{\tau_{k+1}}\widehat{p}^k(t)(\nabla b^k_{\gamma}(t))^T(\widehat{Z}^k(t),u^k(t))-\frac{\partial \widehat{H}^k}{\partial x^k}(t)\widehat{Z}^k(t)-\langle\nabla_{M}\widehat{H}^k(t),M(\widehat{Z}^k(t))\rangle\nonumber\\
&\quad\mbox{}+\widehat{q}(t)(\nabla \sigma^k(t))^T(\widehat{Z}^k(t),u^k(t))+\gamma^{\mathfrak{F}^{k+1}}(t)\widehat{r}(t)(\nabla h^k(t))^T(\widehat{Z}^k(t),u^k(t))dt\Big|\mathfrak{F}_{\tau_k}\Bigg]\nonumber\\
&=\mathbb{E}\left[\int_{\tau_k}^{\tau_{k+1}}\left(\frac{\partial \widehat{H}^k}{\partial u^k}(t)+M(\nabla_{N}\widehat{H}^k(t))\right)u^k(t)-(\nabla f^k(t))^T(\widehat{Z}^k(t),u^k(t))dt\Big|\mathfrak{F}_{\tau_k}\right].
\end{align}
Combining \eqref{7} and \eqref{6}, one has
\begin{equation*}
0=\frac{d}{d\varepsilon}J(X^{k-1}(\tau_k),\mathfrak{L}^{k-1}(X^{k-1}(\tau_k)),\widehat{u}^k+\epsilon u^k)\Big|_{\epsilon=0}=\mathbb{E}\left[\int_{\tau_k}^{\tau_{k+1}}\left(\frac{\partial \widehat{H}^k}{\partial u^k}(t)+M(\nabla_{N}\widehat{H}^k(t))\right)u^k(t)dt\Big|\mathfrak{F}_{\tau_k}\right].
\end{equation*}
Set $u^k(t)=\eta\mathbb{I}_{(t_0,\tau_{k+1}]}(t)\,(t_0\in[\tau_k,\tau_{k+1}])$, where $\eta$ is a bounded $\mathcal{S}_{t_0}$-measurable random variable. Then,
\begin{align*}
0&=\frac{d}{d\epsilon}J(X^{k-1}(\tau_k),\mathfrak{L}^{k-1}(X^{k-1}(\tau_k)),\widehat{u}^k+\epsilon u^k)\Big|_{\epsilon=0}\\
&=\mathbb{E}\left[\int_{t_0}^{\tau_{k+1}}\left(\frac{\partial \widehat{H}^k}{\partial u^k}(t)+M(\nabla_{N}\widehat{H}^k(t))\right)\eta dt\Big|\mathfrak{F}_{\tau_k}\right]\\
&=\mathbb{E}\left[\int_{t_0}^{\tau_{k+1}}\left(\frac{\partial \widehat{H}^k}{\partial u^k}(t)+M(\nabla_{N}\widehat{H}^k(t))\right)\eta dt\right].
\end{align*}
Differentiating with respect to $t_0$, we have
\begin{equation}\label{+1}
0=\mathbb{E}\left[\left(\frac{\partial \widehat{H}^k}{\partial u^k}(t_0)+M(\nabla_{N}\widehat{H}^k(t_0))\right)\eta\right]=\mathbb{E}\left[\mathbb{E}\left[\left(\frac{\partial \widehat{H}^k}{\partial u^k}(t_0)+M(\nabla_{N}\widehat{H}^k(t_0))\right)\Big|\mathcal{S}^k_{t_0}\right]\eta\right].
\end{equation}
Since \eqref{+1} holds for all such $\mathcal{S}^k_{t_0}$-measurable $\eta$, it follows that
$$
0=\mathbb{E}\left[\left(\frac{\partial \widehat{H}^k}{\partial u^k}(t_0)+M(\nabla_{N}\widehat{H}^k(t_0))\right)\Big|\mathcal{S}^k_{t_0}\right],\quad \forall t_0\in[\tau_k,\tau_{k+1}].
$$
The argument above is reversible. Thus, \eqref{nece1} and \eqref{nece2} are equivalent.
\end{proof}

\begin{remark}
It is worth mentioning that Lemma \ref{lm} plays an important role for deriving the maximum principles. However, it loses for the optimal control problems by mean-field type (in the sense of $\mathbb{E}[\cdot]$) SDEs. Therefore, our results differs from many existing works. Moreover, when there is no mean-field term $M(X^{k}(t))$ (resp. $N(u^{k}(t))$) in \eqref{SDE2} and the performance functional $J^{k}(X^{k-1}(\tau_k),\mathfrak{L}^{k-1}(X^{k-1}(\tau_k)),\widetilde{u}^k)$ (see \eqref{+3}), it is easy to see that Theorems \ref{sufficient} and \ref{thm nece} hold without requiring  $\mathcal{M}^k_{\tau_k}= \mathfrak{F}_{\tau_k}$ (resp. $\mathcal{G}^k_{\tau_k}= \mathfrak{F}_{\tau_k}$).
\end{remark}

\section{Solvability of MMFSDEs and MMFBSDEs}
In this section, we prove the existence and uniqueness of solutions to  MMFSDEs and MMFBSDEs, respectively, and apply the obtained results to solve an optimal investment problem.

\subsection{Solution to MMFSDEs}
Omitting the fixed control $u^k\in \mathcal{U}_k^{ad}$ ($k\in \mathcal{I}_0$), MMFSDE with general initial condition is of the form
\begin{equation}\label{SDE3}
\begin{cases}
dX(t)=b(t,X(t),M(X(t)))dt+\sigma(t,X(t),M(X(t)))dB_t+h(t,X(t),M(X(t)))d\mathbb{H}_t, \quad t\in[0,T],\\
X(0)=\beta.
\end{cases}
\end{equation}

We first give some necessary assumptions for both the coefficients and the initial condition of equation \eqref{SDE3} as follows.
\begin{assumption}\label{exi uni sde}\mbox{}
For any $k\in\mathcal{I}_0$, assume that
\begin{enumerate}[($\romannumeral1$)]
\item $\beta\in L^2_{0}(\mathbb{P})$;

\item The functions $b^k(\cdot,X^k,M(X^k)),\sigma^k(\cdot,X^k,M(X^k))$ are $\mathfrak{F}$-progressively measurable, and $h^k(\cdot,X^k,M(X^k))$ is $\mathfrak{F}$-predictable;

\item  $\mathbb{E}\left[\int_{0}^T|b^k(t,0,0)|^2+|\sigma^k(t,0,0)|^2+\gamma^{\mathfrak{F}_k}|h^k(t,0,0)|^2dt\right]<+\infty$;

\item \quad$|b^k(t,x_1,M_1)-b^k(t,x_2,M_2)|^2+|\sigma^k(t,x_1,M_1)-\sigma^k(t,x_2,M_2)|^2+|h^k(t,x_1,M_1)-h^k(t,x_2,M_2)|^2\\
\leq C\left(|x_1-x_2|^2+|{M}_1-{M}_2|_{\mathcal{P}_2(\mathbb{R})}^2\right).$
\end{enumerate}
\end{assumption}

\begin{thm}\label{MMFSDE}
Under Assumption \ref{exi uni sde}, there exists a unique solution $X(\cdot)\in L_{T}^{\mathfrak{F}}$ to \eqref{SDE3}.
\end{thm}

\begin{proof}
Define a map $\Psi$ as follows:
\begin{align*}
\Psi(X(t))&=\beta+\int_{0}^{t}b(s,X(s),M(X(s)))ds+\int_{0}^{t}\sigma(s,X(s),M(X(s)))dB_s+\int_{0}^{t}h(s,X(s),M(X(s)))d\mathbb{H}_s\\
&=\beta+\int_{0}^{t}b(s,X(s),M(X(s)))+\Upsilon(s)h(s,X(s),M(X(s)))ds\\
&\quad\mbox{}+\int_{0}^{t}\sigma(s,X(s),M(X(s)))dB_s+\int_{0}^{t}h(s,X(s),M(X(s)))d\mathbb{A}_s.
\end{align*}
Consider the sequence $\{X^{(m)}(t)\}_{m=1}^{+\infty}$ defined by $X^{(m+1)}(t)=\Psi(X^{(m)}(t))$ with $X^0(t)=\eta$. We firstly show the existence of the solution. To alleviate notations, we write $\overline{X}^{(m+1)}(t)={X}^{(m+1)}(t)-{X}^{(m)}(t)$ and $\pi^{(m)}(t)=\pi(t,X^{(m)}(t),M(X ^{(m)}(t)))\,(\pi\in\{b,\sigma,h\})$.
Following from $\gamma^{\mathfrak{F}^{k+1}}(t)\leq C$, Cauchy's inequality, Lemmas \ref{ito} and \ref{lm}, for $t\in [\tau_k,\tau_{k+1}]$ ($k\in \mathcal{I}_0$), we have
\begin{align}\label{9}
&\quad\,\mathbb{E}\Big[|\overline{X}^{k,(m+1)}(t)|^2\Big|\mathfrak{F}_{\tau_k}\Big]\nonumber\\
&=\mathbb{E}\Bigg[\Big[\int_{\tau_k}^{t}(b^{k,(m)}(s)-b^{k,(m-1)}(s))+\gamma^{\mathfrak{F}^{k+1}}(s)(h^{k,(m)}(s)-h^{k,(m-1)}(s))ds\nonumber\\
&\quad\mbox{}+\int_{\tau_k}^{t}\sigma^{k,(m)}(s)-\sigma^{k,(m-1)}(s)dB_s+\int_{\tau_k}^{t}h^{k,(m)}(s)-h^{k,(m-1)}(s)dA_s\Big]^2\Big|\mathfrak{F}_{\tau_k}\Bigg]\nonumber\\
&\leq 4\mathbb{E}\Big[\left(\int_{\tau_k}^{t}b^{k,(m)}(s)-b^{k,(m-1)}(s)ds\right)^2\Big|\mathfrak{F}_{\tau_k}\Big]+4\mathbb{E}\Big[\left(\int_{\tau_k}^{t}\left(\gamma^{\mathfrak{F}^{k+1}}(s)\right)^2(h^{k,(m)}(s)-h^{k,(m-1)}(s))ds\right)^2\Big|\mathfrak{F}_{\tau_k}\Big]\nonumber\\
&\quad\mbox{}+4\mathbb{E}\Big[\left(\int_{\tau_k}^{t}\sigma^{k,(m)}(s)-\sigma^{k,(m-1)}(s)dB_s\right)^2\Big|\mathfrak{F}_{\tau_k}\Big]+4\mathbb{E}\Big[\left(\int_{\tau_k}^{t}h^{k,(m)}(s)-h^{k,(m-1)}(s)dA_s\right)^2\Big|\mathfrak{F}_{\tau_k}\Big]\nonumber\\
&\leq 4\mathbb{E}\Big[(t-\tau_k)\int_{\tau_k}^{t}(b^{k,(m)}(s)-b^{k,(m-1)}(s))^2+(h^{k,(m)}(t)-h^{k,(m-1)}(s))^2ds\Big|\mathfrak{F}_{\tau_k}\Big]\nonumber\\
&\quad\mbox{}+4\mathbb{E}\Big[\int_{\tau_k}^{t}\left(\sigma^{k,(m)}(s)-\sigma^{k,(m-1)}(s)\right)^2 ds\Big|\mathfrak{F}_{\tau_k}\Big]+4\mathbb{E}\Big[\int_{\tau_k}^{t}\gamma^{\mathfrak{F}^{k+1}}(s)\left(h^{k,(m)}(s)-h^{k,(m-1)}(s)\right)^2 ds\Big|\mathfrak{F}_{\tau_k}\Big]\nonumber\\
&\leq C\mathbb{E}\Big[\int_{\tau_k}^{t}|\overline{X}^{k,(m)}(s)|^2ds\Big|\mathfrak{F}_{\tau_k}\Big]
+C\mathbb{E}\Big[\mathbb{E}\left[\int_{\tau_k}^{t}|\overline{X}^{k,(m)}(s)|^2ds\Big|\mathcal{M}_{\tau_k}\right]\Big|\mathfrak{F}_{\tau_k} \Big]\nonumber\\
&\leq C\mathbb{E}\Big[\int_{\tau_k}^{t}|\overline{X}^{k,(m)}(s)|^2ds\Big|\mathfrak{F}_{\tau_k}\Big].
\end{align}
Set $\overline{X}^{k,(m)}(s)=0$ for $s\in[0,\tau_k)$. Then, \eqref{9} yields
$$
\mathbb{E}\Big[|\overline{X}^{k,(m+1)}(t)|^2\Big]\leq C\mathbb{E}\Big[\int_{\tau_k}^{t}|\overline{X}^{k,(m)}(s)|^2ds\Big]= C\mathbb{E}\Big[\int_{0}^{t}|\overline{X}^{k,(m)}(s)|^2ds\Big].
$$
According to (iii) and (iv) in Assumption \ref{exi uni sde}, it is easy to show that, for $t\in [\tau_k,\tau_{k+1}]$ ($k\in \mathcal{I}_0$),
$$\mathbb{E}\Big[|\overline{X}^{k,(1)}(t)|^2\Big]\leq C(1+\mathbb{E}[|{X}^{k,(0)}(t)|^2])=C(1+\mathbb{E}(\beta^2))=C.$$
Again by the induction,  for $t\in [\tau_k,\tau_{k+1}]$ and $m\geq 1$, we have
$$
\mathbb{E}\Big[|\overline{X}^{k,(m+1)}(t)|^2\Big]\leq C^{m+1}\frac{t^{m}}{m!}.
$$
Integrating both sides from $\tau_k$ to $\tau_{k+1}$ and taking expectation, one has
$$
\mathbb{E}\left[\int_{\tau_k}^{\tau_{k+1}}|\overline{X}^{k,(m+1)}(t)|^2dt\right]\leq C^{m+1}\frac{\mathbb{E}\left[(\tau_{k+1}-\tau_k)\right]^{m+1}}{(m+1)!}\leq \frac{(CT)^{m+1}}{(m+1)!}.
$$
Thus,
\begin{align*}
\mathbb{E}\Big[\int_{0}^T|\overline{X}^{(m)}(t)|^2dt\Big]&=\mathbb{E}\Big[\int_{0}^T|\sum \limits_{k=0}^{n}\overline{X}^{k,(m)}(t)\mathbb{I}_{t\in(\tau_k,\tau_{k+1}]}(t)|^2dt\Big]\\
&\leq(n+1)\sum\limits_{k=0}^{n}\mathbb{E}\Big[\int_{\tau_k}^{\tau_{k+1}}|\overline{X}^{k,(m)}(t)|^2dt\Big]\\
&\leq n(n+1)\frac{(CT)^{m+1}}{(m+1)!}\mathbb{E}\left[1+\beta^2\right],
\end{align*}
which implies that $\{X^{(m)}(t)\}_{m=1}^{+\infty}$ forms a Cauchy sequence in $L_{T}^{\mathfrak{F}}$. Letting $\lim \limits_{m\rightarrow+\infty}X^{(m)}(t)=X(t)$, we have
\begin{align*}
0=&\lim \limits_{m\rightarrow+\infty}\|b(\cdot,X^{(m)}(\cdot),M(X^{(m)}(\cdot)))-b(\cdot,X(\cdot),M(X(\cdot)))\|_{L_{T}^{\mathfrak{F}}}\\
=&\lim \limits_{m\rightarrow+\infty}\|\sigma(\cdot,X^{(m)}(\cdot),M(X^{(m)}(\cdot)))-\sigma(\cdot,X(\cdot),M(X(\cdot)))\|_{L_{T}^{\mathfrak{F}}}\\
=&\lim \limits_{m\rightarrow+\infty}\|h(\cdot,X^{(m)}(\cdot),M(X^{(m)}(\cdot)))-h(\cdot,X(\cdot),M(X(\cdot)))\|_{L_{T}^{\mathfrak{F}}}.
\end{align*}
This shows that $X(\cdot)\in L_{T}^{\mathfrak{F}}$ is a solution to \eqref{SDE3}.

Next we prove the uniqueness. Suppose that there are two solutions $\mathcal{X}^{(1)}(\cdot)$ and $\mathcal{X}^{(2)}(\cdot)$ to \eqref{SDE3}. Similar to the proof of the existence for solutions to \eqref{SDE3}, we can show that, for any $k\in\mathcal{I}_0$,
$$
\mathbb{E}\Big[|\mathcal{X}^{k,(1)}(t)-\mathcal{X}^{k,(2)}(t)|^2\Big|\mathfrak{F}_{\tau_k}\Big]\leq C\int_{\tau_k}^{t}\mathbb{E}\Big[|\mathcal{X}^{k,(1)}(s)-\mathcal{X}^{k,(2)}(s)|^2\Big|\mathfrak{F}_{\tau_k}\Big]ds,\quad t\in [\tau_k,\tau_{k+1}].
$$
By Gronwall's inequality, we have $\mathbb{E}\left[|\mathcal{X}^{k,(1)}(t)-\mathcal{X}^{k,(2)}(t)|^2\right]=0$ and so the uniqueness follows immediately.
\end{proof}

\subsection{Solution to MMFBSDEs }
In this subsection, we prove the existence and uniqueness of solutions to the general MMFBSDEs of the following form
\begin{equation}\label{BSPDE}
\begin{cases}
dp(t)=-F\left(t,p(t),\mathfrak{M}(p(t)),q(t),\mathfrak{N}(q(t)),r_1(t),\cdots,r_n(t),\mathfrak{Q}(r_1(t)),\cdots,\mathfrak{Q}(r_n(t))\right)dt\\
\qquad\qquad+q(t)dB_t+\sum\limits_{k=1}^{n}r_k(t)dA^k_t,\quad t\in[0,T],\\
p(T)=\zeta,
\end{cases}
\end{equation}
where
$$
\mathfrak{M}(\cdot)=\mathfrak{N}(\cdot)=\mathfrak{Q}(\cdot)=\sum\limits_{k=1}^{n}\mathbb{E}\left[W^k(t)\times(\cdot)^k(t)\Big|\mathcal{M}^k_t\right]\mathbb{I}_{t\in (\tau_k,\tau_{k+1}]}+\mathbb{E}\left[W^0(t)\times(\cdot)^0(t)\Big|\mathcal{M}^0_t\right]\mathbb{I}_{t\in [0,\tau_1]}.
$$
Here $W(t)=(W^0(t),\cdots,W^n(t))$ is a given non-negative and upper bounded process, and $F$ is an $\mathfrak{F}$-progressively measurable $C^1$ function.

Firstly, we recall the predictable representation theorem (PRT).
\begin{lemma}\cite{Aksamit2017Enlargement}\label{PRT}
(Predictable Representation Theorem) Under Assumption \ref{AH}, for any martingale $Z(\cdot)\in H_{T}^{\mathfrak{F}}$, there exist $\mathfrak{F}$-predictable processes $\varphi_0\in H_{T}^{\mathfrak{F}}$ and $\varphi_k\in H_{T}^{\mathfrak{F},A^k}\;(k\in\mathcal{I})$ such that
\begin{equation}\label{decomposition}
Z(t)=Z(0)+\int_0^t\varphi_0(s)dB_s+\sum\limits_{k=1}^{n}\int_0^t\varphi_k(s)dA^k_s.
\end{equation}
\end{lemma}

We also need the following lemmas.

\begin{lemma}\label{uniqueness of the decomposition}
The decomposition \eqref{decomposition} is unique.
\end{lemma}

\begin{proof}
If there are $(\varphi_0,\varphi_1,\cdots,\varphi_n),(\psi_0,\psi_1,\cdots,\psi_n)\in H_{T}^{\mathfrak{F}}\times H_{T}^{\mathfrak{F},A^1}\times\cdots\times H_{T}^{\mathfrak{F},A^n}$ satisfying
$$
Z(t)=Z(0)+\int_0^t\varphi_0(s)dB_s+\sum\limits_{k=1}^{n}\int_0^t\varphi_k(s)dA^k_s=Z(0)+\int_0^t\psi_0(s)dB_s+\sum\limits_{k=1}^{n}\int_0^t\psi_k(s)dA^k_s, \quad t\in[0,T],
$$
then
$$\int_0^t\varphi_0(s)-\psi_0(s)dB_s+\sum\limits_{k=1}^{n}\int_0^t(\varphi_k(s)-\psi_k(s))dA^k_s=0$$
and so
\begin{align*}
0&=\mathbb{E}\left[\int_0^T\varphi_0(s)-\psi_0(s)dB_s+\sum\limits_{k=1}^{n}\int_0^T(\varphi_k(s)-\psi_k(s))dA^k_s\right]^2\\
&=\mathbb{E}\left[\int_0^{T}\varphi_0(s)-\psi_0(s)dB_s\right]^2+\sum\limits_{k=1}^{n}\mathbb{E}\left[\int_0^{T}(\varphi_k(s)-\psi_k(s))dA^k_s\right]^2\\
&=\mathbb{E}\left[\int_0^{T}|\varphi_0(s)-\psi_0(s)|^2ds\right]+\sum\limits_{k=1}^{n}\mathbb{E}\left[\int_0^{T}\gamma^{\mathfrak{F}_k}(s)|\varphi_k(s)-\psi_k(s)|^2 ds\right]\\
&=\|\varphi_0(\cdot)-\psi_0(\cdot)\|^2_{H_{T}^{\mathfrak{F}}}+\sum\limits_{k=1}^{n}\|\varphi_k(s)-\psi_k(s)\|^2_{H_{T}^{\mathfrak{F},A^k}}.
\end{align*}
This implies that $\varphi_k(s)=\psi_k(s)\;(k\in\mathcal{I}_0)$ a.s..
\end{proof}

\begin{lemma}\label{BSDE lemma}
For $F(\cdot)\in H_{T}^{\mathfrak{F}}$ and $\zeta\in L^2_{0}(\mathbb{P})$, the following MMFBSDE
\begin{equation*}
\begin{cases}
dp(t)=-F(t)dt+q(t)dB_t+\sum\limits_{k=1}^{n}r_k(t)dA^{k}_t,\quad t\in[0,T];\\
p(T)=\zeta
\end{cases}
\end{equation*}
has a unique solution $(p,q,r_1,\cdots,r_n)\in L_{T}^{\mathfrak{F}}\times H_{T}^{\mathfrak{F}}\times H_{T}^{\mathfrak{F},A^1}\times\cdots\times H_{T}^{\mathfrak{F},A^n}$.
\end{lemma}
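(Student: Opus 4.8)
The plan is to reduce this linear MMFBSDE to the predictable representation theorem, mirroring the classical treatment of linear BSDEs; no fixed-point argument is needed here precisely because the driver $F$ does not depend on $(p,q,r_1,\dots,r_n)$ (this lemma will serve as the base step for a Picard iteration solving the full nonlinear MMFBSDE \eqref{BSPDE}).

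\emph{Existence.} First I would set $\xi := \zeta + \int_0^T F(s)\,ds$. Since $\zeta$ is square-integrable and $F\in H_T^{\mathfrak{F}}$, the Cauchy--Schwarz inequality gives $\mathbb{E}\big[\big(\int_0^T F(s)\,ds\big)^2\big]\le T\,\mathbb{E}\int_0^T|F(s)|^2\,ds<\infty$, so $\xi\in L^2(\mathbb{P})$. Let $Z(t):=\mathbb{E}[\xi\mid\mathfrak{F}_t]$, which admits a c\`{a}dl\`{a}g modification because $\mathfrak{F}$ satisfies the usual conditions and is a square-integrable martingale; since $\mathbb{E}[Z(t)^2]\le\mathbb{E}[Z(T)^2]$ for all $t$ we get $\mathbb{E}\int_0^T|Z(t)|^2\,dt<\infty$, i.e. $Z\in H_T^{\mathfrak{F}}$, so Lemma \ref{PRT} applies and yields $\varphi_0\in H_T^{\mathfrak{F}}$ and $\varphi_k\in H_T^{\mathfrak{F},A^k}$ $(k\in\mathcal{I})$ with $Z(t)=Z(0)+\int_0^t\varphi_0(s)\,dB_s+\sum_{k=1}^n\int_0^t\varphi_k(s)\,dA^k_s$. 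I would then put $p(t):=Z(t)-\int_0^t F(s)\,ds$, $q:=\varphi_0$, $r_k:=\varphi_k$; a direct computation gives $dp(t)=-F(t)\,dt+q(t)\,dB_t+\sum_{k=1}^n r_k(t)\,dA^k_t$ and $p(T)=\xi-\int_0^T F(s)\,ds=\zeta$, while Doob's $L^2$-inequality (bounding $\mathbb{E}[\sup_{t\le T}Z(t)^2]$ by $4\,\mathbb{E}[Z(T)^2]$) together with the uniform $L^2$-bound on $\int_0^t F(s)\,ds$ gives $p\in L_T^{\mathfrak{F}}$. Hence $(p,q,r_1,\dots,r_n)$ lies in the required space and solves the equation.

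\emph{Uniqueness.} Suppose $(p,q,r_1,\dots,r_n)$ and $(p',q',r'_1,\dots,r'_n)$ are two solutions, and write $\delta p=p-p'$, $\delta q=q-q'$, $\delta r_k=r_k-r'_k$. Subtracting the equations, the $F$-terms cancel, so $d\,\delta p(t)=\delta q(t)\,dB_t+\sum_{k=1}^n\delta r_k(t)\,dA^k_t$; since $\delta q\in H_T^{\mathfrak{F}}$ and $\delta r_k\in H_T^{\mathfrak{F},A^k}$, these stochastic integrals are genuine square-integrable martingales, so $\delta p$ is a martingale with $\delta p(T)=0$, forcing $\delta p(t)=\mathbb{E}[\delta p(T)\mid\mathfrak{F}_t]=0$ for all $t$. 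Then $\int_0^t\delta q(s)\,dB_s+\sum_{k=1}^n\int_0^t\delta r_k(s)\,dA^k_s\equiv 0$, and Lemma \ref{uniqueness of the decomposition} gives $\delta q=0$ and $\delta r_k=0$ for all $k\in\mathcal{I}$. (Equivalently one may apply Lemma \ref{ito} to $|\delta p(t)|^2$ to obtain $\mathbb{E}\int_0^T|\delta q|^2\,dt+\sum_{k=1}^n\mathbb{E}\int_0^T\gamma^{\mathfrak{F}^k}|\delta r_k|^2\,dt=0$, yielding the same conclusion.)

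\emph{Main obstacle.} Everything beyond the integrability bookkeeping is a direct translation of the predictable representation theorem; the point that needs real care is verifying square-integrability at each stage --- that $\xi\in L^2(\mathbb{P})$, that $Z$ actually lies in $H_T^{\mathfrak{F}}$ so that Lemma \ref{PRT} is legitimately applicable, and that the constructed $p$ belongs to $L_T^{\mathfrak{F}}$ rather than merely being finite a.s. I expect this, rather than any conceptual difficulty, to be where most of the argument goes.
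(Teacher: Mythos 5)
Your proposal is correct and follows essentially the same route as the paper: define the square-integrable martingale $\mathbb{E}\left[\zeta+\int_0^T F(s)\,ds\,\Big|\,\mathfrak{F}_t\right]$, apply the predictable representation theorem (Lemma \ref{PRT}) together with the uniqueness of that decomposition (Lemma \ref{uniqueness of the decomposition}), and set $p(t)$ equal to this martingale minus $\int_0^t F(s)\,ds$. Your write-up is in fact slightly more careful than the paper's on two points the paper glosses over --- the explicit Doob-inequality check that $p\in L_T^{\mathfrak{F}}$ and the fully spelled-out uniqueness argument via the vanishing martingale $\delta p$ --- but these are refinements of the same proof, not a different one.
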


\begin{proof}
Set $C(t)=\mathbb{E}\left[\zeta+\int_{0}^{T}F(t)dt\Big|\mathfrak{F}_t\right]$. Clearly, $C(t)$ is an $\mathfrak{F}_t$-martingale. Moreover, it follows from  Cauchy's inequality that
\begin{equation*}
C^2(t)=\mathbb{E}\left[\left[\zeta+\int_{0}^{T}F(t)dt\right]\Big|\mathfrak{F}_{t}\right]^2\leq \mathbb{E}\left[\left[\zeta+\int_{0}^{T}F(t)dt\right]^2\Big|\mathfrak{F}_{t}\right]
\leq 2\mathbb{E}\left[\zeta^2\Big|\mathfrak{F}_{t}\right]+2T\mathbb{E}\left[\int_{0}^{T}F^2(t)dt\Big|\mathfrak{F}_{t}\right].
\end{equation*}
This indicates $C(\cdot)\in H_{T}^{\mathfrak{F}}$. By Lemmas \ref{PRT} and \ref{uniqueness of the decomposition}, there exists a unique $(q,r_1,\cdots,r_n)\in H_{T}^{\mathfrak{F}}\times H_{T}^{\mathfrak{F},A^1}\times\cdots\times H_{T}^{\mathfrak{F},A^n}$ such that
$$
C(t)=C(0)+\int_{0}^{t}q(s)dB_s+\sum\limits_{k=1}^{n}\int_{0}^{t}r_k(s)dA^k_s
$$
and so
$$
C(T)-C(t)=\int_t^{T}q(s)dB_s+\sum\limits_{k=1}^{n}\int_{t}^{T}r_k(s)dA^k_s.
$$
Setting
\begin{equation}\label{+9}
p(t)=C(t)-\int_{0}^tF(s)ds=\mathbb{E}\left[\zeta+\int_t^{T}F(s)ds\Big|\mathfrak{F}_t\right],
\end{equation}
 we have $p(T)=\zeta$ and
\begin{equation}\label{+13}
p(T)-p(t)=\int_t^{T}q(s)dB_s+\sum\limits_{k=1}^{n}\int_{t}^{T}r_k(s)dA^k_s-\int_t^{T}F(s)ds.
\end{equation}
This ends the proof.
\end{proof}

To ensure the existence and uniqueness result, we also need the following assumption.

\begin{assumption}\label{exi and uni}
Suppose that the following assumptions hold:
\begin{enumerate}[($\romannumeral1$)]
\item $\zeta\in L^2_{T}(\mathbb{P})$;

\item $F(t,0,0,\cdots,0)\in H_{T}^{\mathfrak{F}}$;
\item  For any $t,p_1,q_1,r_1,p_2,q_2,r_2$, there is a constant $C>0$ such that
\begin{align*}
&\left|F(t,p,\mathfrak{M},q,\mathfrak{N},r_1,\mathfrak{Q}_1,\cdots,r_n,\mathfrak{Q}_n)
    -F(t,\widetilde{p},\widetilde{\mathfrak{M}},\widetilde{q},\widetilde{\mathfrak{N}},\widetilde{r}_1,\widetilde{\mathfrak{Q}}_1,\cdots,\widetilde{r}_n,
    \widetilde{\mathfrak{Q}}_n)\right|^2\\
\leq& C\bigg(|p-\widetilde{p}|^2+|\mathfrak{M}-\widetilde{\mathfrak{M}}|_{\mathcal{P}_2(\mathbb{R})}^2+|q-\widetilde{q}|^2+|\mathfrak{N}-\widetilde{\mathfrak{N}}|_{\mathcal{P}_2(\mathbb{R})}^2
+\sum\limits_{k=1}^{n}\sqrt{\gamma^{\mathfrak{F}_k}}\left(|r_k-\widetilde{r}_k|^2+|(\mathfrak{Q}_k-\widetilde{\mathfrak{Q}}_k|_{\mathcal{P}_2(\mathbb{R})}^2 \right)\bigg).
\end{align*}
\end{enumerate}
\end{assumption}

\begin{thm}\label{MMFBSDE}
Under Assumption \ref{exi and uni}, MMFBSDE \eqref{BSPDE} has a unique solution $(p,q,r_1,\cdots,r_n)\in L_{T}^{\mathfrak{F}}\times H_{T}^{\mathfrak{F}}\times H_{T}^{\mathfrak{F},A^1}\times\cdots\times H_{T}^{\mathfrak{F},A^n}$.
\end{thm}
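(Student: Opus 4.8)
The plan is to prove Theorem~\ref{MMFBSDE} by a Banach fixed-point (Picard) argument on an appropriately weighted product space, using Lemma~\ref{BSDE lemma} as the basic solvability engine and Lemmas~\ref{PRT}--\ref{uniqueness of the decomposition} for the representation that underlies it. First I would fix the Banach space
$$
\mathcal{B}=L_{T}^{\mathfrak{F}}\times H_{T}^{\mathfrak{F}}\times H_{T}^{\mathfrak{F},A^1}\times\cdots\times H_{T}^{\mathfrak{F},A^n},
$$
but equip it with the equivalent $\beta$-weighted norm
$$
\|(p,q,r_1,\dots,r_n)\|_{\beta}^2=\mathbb{E}\!\left[\int_0^T e^{\beta t}\Big(|p(t)|^2+|q(t)|^2+\sum_{k=1}^n\gamma^{\mathfrak{F}^k}(t)|r_k(t)|^2\Big)dt\right]
$$
for a constant $\beta>0$ to be chosen large. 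Given any $(\bar p,\bar q,\bar r_1,\dots,\bar r_n)\in\mathcal{B}$, I would define
$$
F^{\bar{\,}}(t)=F\big(t,\bar p(t),\mathfrak{M}(\bar p(t)),\bar q(t),\mathfrak{N}(\bar q(t)),\bar r_1(t),\dots,\bar r_n(t),\mathfrak{Q}(\bar r_1(t)),\dots,\mathfrak{Q}(\bar r_n(t))\big),
$$
check via Assumption~\ref{exi and uni}(ii)--(iii) together with the analogue of Lemma~\ref{lm} for the conditional operators $\mathfrak{M},\mathfrak{N},\mathfrak{Q}$ (whose weights $W^k$ are non-negative and upper bounded) that $F^{\bar{\,}}(t)\in H_T^{\mathfrak{F}}$, and then apply Lemma~\ref{BSDE lemma} to get a unique $(p,q,r_1,\dots,r_n)\in\mathcal{B}$ solving the linear MMFBSDE with driver $F^{\bar{\,}}$ and terminal value $\zeta$. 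This defines the map $\Gamma:\mathcal{B}\to\mathcal{B}$, whose fixed point is exactly the solution of \eqref{BSPDE}.

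Next I would prove $\Gamma$ is a contraction for $\beta$ large. Take two inputs and let $(\delta p,\delta q,\delta r_k)$ be the difference of their images and $(\delta\bar p,\delta\bar q,\delta\bar r_k)$ the difference of the inputs. Applying It\^{o}'s formula (Lemma~\ref{ito}) to $e^{\beta t}|\delta p(t)|^2$ on $[0,T]$, using $\delta p(T)=0$ and taking expectations, the martingale terms vanish and one obtains
$$
\mathbb{E}\!\left[\int_0^T e^{\beta t}\Big(\beta|\delta p(t)|^2+|\delta q(t)|^2+\sum_{k=1}^n\gamma^{\mathfrak{F}^k}(t)|\delta r_k(t)|^2\Big)dt\right]
=2\,\mathbb{E}\!\left[\int_0^T e^{\beta t}\delta p(t)\,\big(F^{\bar{\,}}_1(t)-F^{\bar{\,}}_2(t)\big)dt\right].
$$
Now bound the right-hand side: the Lipschitz hypothesis Assumption~\ref{exi and uni}(iii), combined with the contraction-type estimate $|\mathfrak{M}(\delta\bar p(t))|^2\le C\,\mathbb{E}[|\delta\bar p(t)|^2\mid\mathcal{M}^k_t]$ (and similarly for $\mathfrak{N}$ and the weighted $\mathfrak{Q}$, using boundedness of $W$ and of $\gamma^{\mathfrak{F}^k}$), gives
$$
|F^{\bar{\,}}_1(t)-F^{\bar{\,}}_2(t)|\le C\Big(|\delta\bar p(t)|+|\delta\bar q(t)|+\sum_{k=1}^n\sqrt{\gamma^{\mathfrak{F}^k}(t)}\,|\delta\bar r_k(t)|\Big)+(\text{conditional-expectation terms}),
$$
and after applying the tower property and Young's inequality $2ab\le\varepsilon a^2+\varepsilon^{-1}b^2$ one arrives at
$$
(\beta-1)\|\delta p,\delta q,\delta r\|_{\beta}^2\le \frac{C'}{\beta}\,\|\delta\bar p,\delta\bar q,\delta\bar r\|_{\beta}^2
$$
for some $C'$ depending only on the Lipschitz constant, $T$, $n$, and the sup-bounds of $W$ and $\gamma^{\mathfrak{F}}$. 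Choosing $\beta$ large enough makes the constant $<1$, so $\Gamma$ is a contraction and Banach's fixed-point theorem yields a unique fixed point, which is the unique solution in $\mathcal{B}$; uniqueness in the original (unweighted) norm follows since the norms are equivalent.

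The main obstacle I anticipate is handling the conditional mean-field operators $\mathfrak{M},\mathfrak{N},\mathfrak{Q}$ carefully inside the a priori estimates: unlike ordinary expectations these are $\mathcal{M}^k_t$-conditional and piecewise-defined across the default intervals, so the estimate $|\mathfrak{Q}(\delta\bar r_k(t))|^2\le C\,\mathbb{E}[\gamma^{\mathfrak{F}^k}(t)|\delta\bar r_k(t)|^2\mid\mathcal{M}^k_t]$ must be set up so that, after the outer expectation and the tower property, it is controlled by $\mathbb{E}[\int_0^T e^{\beta t}\gamma^{\mathfrak{F}^k}(t)|\delta\bar r_k(t)|^2dt]$ and not something stronger; this is where boundedness of $W^k$ and the upper bound on $\gamma^{\mathfrak{F}^k}$ from Assumption~\ref{AH} are essential, and where the weight $\sqrt{\gamma^{\mathfrak{F}^k}}$ in Assumption~\ref{exi and uni}(iii) is exactly calibrated. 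A secondary point requiring care is verifying that the driver $F^{\bar{\,}}(t)$ produced from an input in $\mathcal{B}$ genuinely lies in $H_T^{\mathfrak{F}}$ (so that Lemma~\ref{BSDE lemma} applies), which again reduces to the growth bound coming from Assumption~\ref{exi and uni}(ii)--(iii) and the boundedness of the mean-field weights.
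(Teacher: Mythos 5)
Your argument is correct in outline, but it takes a genuinely different route from the paper. You set up a single Picard/Banach contraction $\Gamma$ on the product space equipped with an $e^{\beta t}$-weighted norm, absorbing the full dependence of the driver on $(p,\mathfrak{M}(p),q,\mathfrak{N}(q),r_k,\mathfrak{Q}(r_k))$ in one step; existence and uniqueness then drop out simultaneously from the fixed-point theorem. The paper instead runs a two-stage Picard iteration in the unweighted norms: Step~1 treats the special case where $F$ does not depend on $p$ (iterating in $(q,r_1,\dots,r_n)$ and proving geometric decay of the increments $L^{(m)}$ via the multiplier $e^{t/\rho}$ with $\rho=\tfrac{1}{2C}$, followed by a separate Gronwall argument for uniqueness), and Step~2 handles the general driver by freezing $p^{(m)}$ and invoking Step~1 at each stage, with a factorial-decay estimate $C^{m+1}T^m/m!$. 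Both constructions rest on the same ingredients --- Lemma~\ref{BSDE lemma} (hence the predictable representation, Lemmas~\ref{PRT} and \ref{uniqueness of the decomposition}) for the linear equation, It\^{o}'s formula for $|\delta p|^2$ with the compensator $\gamma^{\mathfrak{F}^k}dt$ replacing $d\mathbb{H}^k_t$ under expectation, and the Lipschitz/conditional-Jensen estimates for the mean-field operators --- so your proof buys a shorter, one-shot argument at the cost of introducing the weighted norm, while the paper's version stays in the natural norms and makes the ``driver independent of $p$'' reduction explicit. One caveat you rightly flag but do not fully resolve: the step bounding $\mathbb{E}\bigl[\int_0^T\gamma^{\mathfrak{F}^k}(t)\,|\mathfrak{Q}(\delta\bar r_k)(t)|^2\,dt\bigr]$ by $C\,\mathbb{E}\bigl[\int_0^T\gamma^{\mathfrak{F}^k}(t)\,|\delta\bar r_k(t)|^2\,dt\bigr]$ is delicate, since boundedness of $W^k$ and the upper bound on $\gamma^{\mathfrak{F}^k}$ only give control by $\mathbb{E}\bigl[\int_0^T|\delta\bar r_k(t)|^2\,dt\bigr]$, which is not dominated by the $H_T^{\mathfrak{F},A^k}$ norm unless $\gamma^{\mathfrak{F}^k}$ is bounded away from zero or the weight in $\mathfrak{Q}$ is calibrated to absorb it; the paper's proof passes over the same point when asserting $\mathbb{E}[\int_t^T|F^{(m)}-F^{(m-1)}|^2\,ds]\le CL^{(m)}(t)$, so your proposal is at the same level of rigor as the published argument on this issue, but a fully airtight version of either proof would need to make this estimate precise.
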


\begin{proof}
We decompose the proof into two steps.

{\bf Step 1: Special case}

Assume that the driver $F$ is independent of $p$ and $\overline{p}$ such that
\begin{equation}\label{BSDE 1}
\begin{cases}
dp(t)=-F\left(t,q(t),\mathfrak{N}(q(t)),r_1(t),\cdots,r_n(t),\mathfrak{Q}(r_1(t)),\cdots,\mathfrak{Q}(r_n(t))\right)dt\\
\qquad\qquad+q(t)dB_t+\sum\limits_{k=1}^{n}r_k(t)dA^k_t,\quad t\in[0,T],\\
p(T)=\zeta.
\end{cases}
\end{equation}

We first prove the uniqueness and  existence of solutions to \eqref{BSDE 1}. By Lemma \ref{BSDE lemma}, for each $m\in \mathbb{N}$, there exists a unique solution $\left(p^{(m+1)},q^{(m+1)},r^{(m+1)}_1,\cdots,r^{(m+1)}_n\right)\in L_{T}^{\mathfrak{F}}\times H_{T}^{\mathfrak{F}}\times H_{T}^{\mathfrak{F},A^1}\times\cdots\times H_{T}^{\mathfrak{F},A^n}$ to the following MMFBSDE:

\begin{equation}\label{BSDE 2}
\begin{cases}
dp^{(m+1)}(t)=-F\left(t,q^{(m)}(t),\mathfrak{N}(q^{(m)}(t)),r^{(m)}_1(t),\cdots,r^{(m)}_n(t),\mathfrak{Q}(r^{(m)}_1(t)),\cdots,\mathfrak{Q}(r^{(m)}_n(t))\right)dt\\
\qquad\qquad\qquad+q^{(m+1)}(t)dB_t+\sum\limits_{k=1}^{n}r^{(m+1)}_k(t)dA^k_t,\quad t\in[0,T],\\
p^{(m+1)}(T)=\zeta,
\end{cases}
\end{equation}
where $q^{0}(t)=r^{0}(t)=0$ a.s. for all $t\in[0,T]$.

For simplicity, we write
\begin{align}\label{10}
&F^{(m)}(t)=F(t,q^{(m)}(t),\mathfrak{N}(q^{(m)}(t)),r^{(m)}_1(t),\cdots,r^{(m)}_n(t),\mathfrak{Q}(r^{(m)}_1(t)),\cdots,\mathfrak{Q}(r^{(m)}_n(t))),\nonumber\\
&L^{(m+1)}(t)=\mathbb{E}\left[\int_t^{T}|q^{(m+1)}(s)-q^{(m)}(s)|^2+\sum\limits_{k=1}^{n}\gamma^{\mathfrak{F}^{k}}(s)|r_k^{(m+1)}(s)-r_k^{(m)}(s)|^2ds\right].
\end{align}

Next we show that $\left(p^{(m)},q^{(m)},r_1^{(m)},\cdots,r_n^{(m)}\right)$ forms a Cauchy sequence. In fact, it follows from Lemma \ref{lm} that
$$
\mathbb{E}\left[\int_t^{T}|F^{(m)}(s)-F^{(m-1)}(s)|^2ds\right]\leq  CL^{(m)}(t).
$$
Applying Lemma \ref{ito} to $|p^{{(m+1)}}(t)-p^{(m)}(t)|^2$ and taking expectation, we have
\begin{align}\label{Ln}
&\quad\, \mathbb{E}\left[|p^{{(m+1)}}(t)-p^{(m)}(t)|^2\right]\nonumber\\
&=2\mathbb{E}\left[\int_t^{T}\left(p^{{(m+1)}}(s)-p^{(m)}(s)\right)\left(F^{(m)}(s)-F^{(m-1)}(s)\right)ds\right]-L^{(m+1)}(t)\nonumber\\
&\leq\mathbb{E}\left[\int_t^{T}\frac{1}{\rho}|p^{{(m+1)}}(s)-p^{(m)}(s)|^2+\rho |F^{(m)}(s)-F^{(m-1)}(s)|^2ds\right]-L^{(m+1)}(t)\nonumber\\
&\leq\frac{1}{\rho}\mathbb{E}\left[\int_t^{T}|p^{{(m+1)}}(s)-p^{(m)}(s)|^2ds\right]+\rho CL^{(m)}(t)-L^{(m+1)}(t),
\end{align}
where $\rho$ is any positive constant. Multiplying by $e^{\frac{1}{\rho} t}$ and integrating both sides in $[0,T]$, one has
\begin{equation*}
0\leq\mathbb{E}\left[\int_{0}^{T}|p^{{(m+1)}}(s)-p^{(m)}(s)|^2ds\right]\leq \rho C\int_{0}^{T}e^{\frac{1}{\rho}t}L^{(m)}(t)dt-\int_{0}^{T}e^{\frac{1}{\rho} t}L^{(m+1)}(t)dt.
\end{equation*}
By choosing $\rho=\frac{1}{2C}$, we have
\begin{equation}\label{120}
\int_{0}^{T}e^{2Ct}L^{(m)}(t)dt\leq \frac{1}{2^{m-1}}\int_{0}^{T}e^{2Ct}L^{(1)}(t)dt\leq\frac{C}{2^{m-1}}
\end{equation}
and
\begin{equation}\label{113}
\|p^{(m+1)}(\cdot)-p^{(m)}(\cdot)\|_{H_{T}^{\mathfrak{F}}}\leq \frac{1}{2}\int_{0}^{T}e^{\frac{1}{\rho}t}L^{(m)}(t)dt\leq\frac{C}{2^m},
\end{equation}
which implies $\lim \limits_{m\rightarrow +\infty}\|p^{(m+1)}(\cdot)-p^{(m)}(\cdot)\|_{H_{T}^{\mathfrak{F}}}=0$. Substituting \eqref{120} and \eqref{113} into \eqref{Ln}, one has
$$
\mathbb{E}\left[L^{(m+1)}(t)\right]\leq \frac{C}{2^m}+\frac{1}{2}\mathbb{E}\left[L^{(m)}(t)\right],
$$
which implies
$$
\mathbb{E}\left[L^{(m+1)}(t)\right]\leq \frac{C}{2^m}+\frac{1}{2^m}\mathbb{E}\left[L^{(0)}(t)\right].
$$
 Combining this and \eqref{10} obtains that,  for any $t\in[0,T]$,
\begin{equation}\label{11}
0=\lim \limits_{m\rightarrow +\infty}\left(\mathbb{E}\left[\int_t^{T}|q^{(m+1)}(s)-q^{(m)}(s)|^2ds\right]+\mathbb{E}\left[\int_t^{T}\sum\limits_{k=1}^{n}\gamma^{\mathfrak{F}^{k}}(s)|r_k^{(m+1)}(s)-r_k^{(m)}(s)|^2ds\right]\right).
\end{equation}
From \eqref{113} and \eqref{11}, we can see that $\left(p^{(m+1)},q^{(m+1)},r^{(m+1)}_1,\cdots,r^{(m+1)}_n\right)$ forms a Cauchy sequence in $L_{T}^{\mathfrak{F}}\times H_{T}^{\mathfrak{F}}\times H_{T}^{\mathfrak{F},A^1}\times\cdots\times H_{T}^{\mathfrak{F},A^n}$. Thus, we know that  $\left(p^{(m)},q^{(m)},r^{(m)}_1,\cdots,r^{(m)}_n\right)$ converges to a limit $(p,q,r_1,\cdots,r_n)\in L_{T}^{\mathfrak{F}}\times H_{T}^{\mathfrak{F}}\times H_{T}^{\mathfrak{F},A^1}\times\cdots\times H_{T}^{\mathfrak{F},A^n}$. Letting $m\rightarrow +\infty$ in \eqref{BSDE 1}, we conclude that $(p,q,r)$ is a solution to \eqref{BSDE 1}. This indicates the existence of solutions to \eqref{BSDE 1}.

Finally,  we proceed to show the uniqueness of solutions to \eqref{BSDE 1}. Let $(p,q,r(\cdot))$ and $(p^{(0)},q^{(0)},r^{(0)}(\cdot))$ be two solutions of \eqref{BSDE 1}. As the same arguments in {\bf Step 1},
we can show that
\begin{align*}
&\quad\;\mathbb{E}[|p(t)-p^{(0)}(t)|^2]-\frac{1}{\rho}\mathbb{E}\left[\int_t^{T}|p(s)-p^{(0)}(s)|^2ds\right]\\
&\leq(\rho C-1)\mathbb{E}\Big[\int_t^{T}|q(s)-q^{(0)}(s)|^2+\sum\limits_{k=1}^{n}\gamma^{\mathfrak{F}^{k}}(s)|r_k(s)-r_k^{(0)}(s)|^2ds\Big].
\end{align*}
By setting $\rho=\frac{1}{2C}$, one has
$$
\mathbb{E}[|p(t)-p^{(0)}(t)|^2]\leq  2C\mathbb{E}\left[\int_t^{T}|p(s)-p^{(0)}(s)|^2ds\right].
$$
By Gronwall's Lemma, we can deduce that $\mathbb{E}|p(t)-p^{(0)}(t)|^2=0$ and $p(t)=p^{(0)}(t)$ a.s. and so
\begin{align*}
-\frac{1}{2}\mathbb{E}\bigg[\int_t^{T}|q(s)-q^{(0)}(s)|^2+\sum\limits_{k=1}^{n}\gamma^{\mathfrak{F}^{k}}(s)|r_k(s)-r_k^{(0)}(s)|^2ds\bigg]\geq0,
\end{align*}
which implies that $q(t)=q^{(0)}(t)$ and $r_k(t)=r_k^{(0)}(t)\,(k\in\mathcal{I})$ a.s. and so the uniqueness follows.

{\bf Step 2: General case}

Consider the following iteration with general driver $F$
\begin{equation}\label{BSDE 3}
\begin{cases}
dp^{(m+1)}(t)=-F\Big(t,p^{(m)}(t),\mathfrak{M}(p^{(m)})(t),q^{(m+1)}(t),\mathfrak{N}(q^{(m+1)}(t)),r^{(m)}_1(t),\cdots,r^{(m)}_n(t),\\
\qquad\qquad\qquad \mathfrak{Q}(r^{(m)}_1(t)),\cdots,\mathfrak{Q}(r^{(m)}_n(t))\Big)dt+q^{(m+1)}(t)dB_t+\sum\limits_{k=1}^{n}r^{(m+1)}_k(t)dA^k_t,\quad t\in[0,T],\\
p^{(m+1)}(T)=\zeta,
\end{cases}
\end{equation}
where $p^0(t)=0$. It follows from {\bf Step 1} that, for each $m\in \mathbb{N}$, there exists a unique solution $$\left(p^{(m)},q^{(m)},r^{(m)}_1,\cdots,r^{(m)}_n\right)\in L_{T}^{\mathfrak{F}}\times H_{T}^{\mathfrak{F}}\times H_{T}^{\mathfrak{F},A^1}\times\cdots\times H_{T}^{\mathfrak{F},A^n}$$
satisfying the following inequality by choosing $\rho=\frac{1}{2C}$,
\begin{align*}
&\quad\,\mathbb{E}\left[|p^{(m+1)}(t)-p^{(m)}(t)|^2\right]+\frac{1}{2}L^{(m+1)}(t)\\
&\leq 2C\mathbb{E}\left[\int_t^{T}|p^{(m+1)}(s)-p^{(m)}(s)|^2ds\right]
+\frac{1}{2}\mathbb{E}\left[\int_t^{T}|p^{(m)}(s)-p^{(m-1)}(s)|^2ds\right].
\end{align*}
Hence,
$$
\mathbb{E}\left[|p^{(m+1)}(t)-p^{(m)}(t)|^2\right]\leq 2C\mathbb{E}\left[\int_t^{T}|p^{(m+1)}(s)-p^{(m)}(s)|^2ds\right]
+\frac{1}{2}\mathbb{E}\left[\int_t^{T}|p^{(m)}(s)-p^{(m-1)}(s)|^2ds\right].
$$
Multiplying by $e^{2C t}$ and integrating both sides in $[T_0,T]$ $(T_0\in[\tau_k,\tau_{k+1}])$, one has
\begin{align*}
\int_{T_0}^{T}\mathbb{E}|p^{(m+1)}(t)-p^{(m)}(t)|^2dt&\leq\frac{1}{2}\int_{T_0}^{T}e^{2Ct}\mathbb{E}\left[\int_t^{T}|p^{(m)}(s)-p^{(m-1)}(s)|^2ds\right]dt\\
 &\leq C\int_{T_0}^{T}\mathbb{E}\left[\int_t^T|p^{(m)}(s)-p^{(m-1)}(s)|^2ds\right]dt.
\end{align*}
Noticing that $\mathbb{E}\left[\int_t^{T}|p^{(1)}(s)-p^{(0)}(s)|^2ds\right]\leq C$, it follows from the induction that
$$
\int_{T_0}^{T}\mathbb{E}|p^{(m+1)}(t)-p^{(m)}(t)|^2dt\leq \frac{C^{m+1}(T-T_0)^m}{m!}\leq \frac{C^{m+1}T^m}{m!}.
$$
Moreover,
\begin{align*}
\mathbb{E}\left[L^{(m+1)}(T_0)\right]&=\mathbb{E}\left[\int_{T_0}^{T}|q^{(m+1)}(s)-q^{(m)}(s)|^2+\sum\limits_{k=1}^{n}\gamma^{\mathfrak{F}^{k}}(s)|r_k^{(m+1)}(s)-r_k^{(m)}(s)|^2ds\right]\\
&\leq 4C\mathbb{E}\left[\int_{T_0}^{T}|p^{(m+1)}(s)-p^{(m)}(s)|^2ds\right]
+\mathbb{E}\left[\int_{T_0}^{T}|p^{(m)}(s)-p^{(m-1)}(s)|^2ds\right]\\
&\leq \frac{4C^{m+2}T^m}{m!}+\frac{C^{m}T^{m-1}}{(m-1)!}.
\end{align*}
Setting $T_0=0$, it is easy to see that $\left(p^{(m)},q^{(m)},r^{(m)}_1,\cdots,r^{(m)}_n\right)$ forms a Cauchy sequence in $L_{T}^{\mathfrak{F}}\times H_{T}^{\mathfrak{F}}\times H_{T}^{\mathfrak{F},A^1}\times\cdots\times H_{T}^{\mathfrak{F},A^n}$. A similar argument as in {\bf Step 1} shows that $(p,q,r)=\lim \limits_{m\rightarrow +\infty}(p^{(m)},q^{(m)},r^{(m)})$ is the unique solution to \eqref{BSPDE}, which completes the proof.
\end{proof}
\subsection{An Optimal Investment Problem}
In this subsection, we consider an optimal investment problem, in which the wealth process $X(t)$ governed by the following MMFSDE:
\begin{equation}\label{SDE4}
\begin{cases}
dX^{(i)}(t)=X^{(i)}(t)\left[(S^{i,0}(t)-u^{(i)}(t)+\gamma^{(i+1)}(t)S^{i,2}(t))dt+S^{i,1}(t)dB_t+S^{i,2}(t)dA^{(i+1)}_t\right]\\
\qquad\qquad\:\: \mbox{}+\overline{X}^{(i)}(t)S^{i,3}(t)dt, \quad t\in(\tau_i,\tau_{i+1}],\\
X^{(i)}(\tau_i)=X^{(i-1)}(\tau_i),\quad X^{(-1)}(\tau_0)=x_0\in \mathbb{R}^+.
\end{cases}
\end{equation}
 Here $\overline{X}^{(i)}(t)=M(X^{(i)}(t))$ and
\begin{itemize}
\item $S^{i,j}(\cdot)\;(i=0,1,2,\;j=0,1,2,3)$ are all given deterministic, bounded and continuous functions on $[0,+\infty)$;
\item $S^{0,2}(t)>-1$, $S^{1,2}(t)>-1$, $S^{2,2}(t)=0$ and $S^{i,3}(t)\geq 0$ for all $t\geq0$;
\item $\gamma^{(i+1)}(t)=\gamma^{\mathfrak{F}^{i+1}}(t)$ is deterministic;
\item $
\mathcal{U}^{ad}=\Bigg\{u=(u^{(0)},u^{(1)},u^{(2)})\Big|u^{(i)}\,\mbox{is}\,\mathcal{O}(\mathcal{G}^{(i)},\mathbb{R})-\mbox{measurable with}\;{\sum \limits_{i=0}^{2}}\,\mathbb{E}\left[{\int_{\tau_i}^{\tau_{i+1}}}|u^{(i)}(t)|^2dt<\infty\right]\Bigg\};
$
\item $\mathcal{M}^i_{t}=\mathcal{G}^i_{t}=\mathcal{S}^i_{t}= \mathfrak{F}_{\tau_i},\; \forall t\in[\tau_i,\tau_{i+1})$,\;i=0,1,2.
\end{itemize}
We note that when $\gamma^{\mathfrak{F}^{i}}(t)$ ($i=1,2,3$) is deterministic and Assumption \ref{AH} holds, $\tau_i$ can be regarded as the default time in the Cox model \cite{Aksamit2017Enlargement}. Moreover, $\tau_i$ has a closed relationship with the Poisson process. If $\mathbb{N}^i_t$ is an inhomogeneous Poisson process with intensity $\gamma^{\mathfrak{F}^{i}}(t)$ and first jump time $\tau_i$, then $\mathbb{I}_{\tau_i\leq t}=\mathbb{N}^i_{t\wedge\tau_i}$, where $\mathbb{I}_{\tau_i\leq t}$ is the indicator process.

According to Theorem \ref{MMFSDE}, there exists a unique solution $X(\cdot)\in L_{T}^{\mathfrak{F}}$ to \eqref{SDE4} for any fixed $T>0$.  Taking conditional expectation on both sides in \eqref{SDE4} derives
\begin{equation*}
\begin{cases}
d\overline{X}^{(i)}(t)=\overline{X}^{(i)}(t)\left[S^{i,0}(t)-u^{(i)}(t)+\gamma^{(i+1)}(t)S^{i,2}(t)+S^{i,3}(t)\right]dt, \quad t\in(\tau_i,\tau_{i+1}],\\
\overline{X}^{(i)}(\tau_i)=X^{(i-1)}(\tau_i),\quad \overline{X}^{(-1)}(\tau_0)=x_0\in \mathbb{R}^+.
\end{cases}
\end{equation*}
Moreover, by comparison theorem, we know that $X^{(i)}(t)>0$ a.s.. Then, for any $t\in[\tau_{i},\tau_{i+1}]$ with $i=0,1,2$, considering the following SDE
\begin{equation*}
\begin{cases}
d{G}^{(i)}(t)={G}^{(i)}(t)\left[-S^{i,0}(t)+u^{(i)}(t)-\gamma^{(i+1)}(t)S^{i,2}(t)+|S^{i,1}(t)|^2+\gamma^{(i+1)}(t)|S^{i,2}(t)|^2\right]dt,\\
\qquad\qquad\mbox{}-{G}^{(i)}(t)S^{i,1}(t)dB_t-{G}^{(i)}(t)S^{i,2}(t)dA^{(i+1)}_t \quad t\in(\tau_i,\tau_{i+1}],\\
G^{(i)}(\tau_i)=G^{(i-1)}(\tau_i),\quad G^{(-1)}(\tau_0)=1.
\end{cases}
\end{equation*}
and applying Lemma \ref{ito} to ${X}^{(i)}(t){G}^{(i)}(t)$, one can easily check that
$$
d({X}^{(i)}(t){G}^{(i)}(t))=\overline{X}^{(i)}(t){G}^{(i)}(t)S^{i,3}(t)dt.
$$
Then applying Theorem A.1 in \cite{Peng2009BSDE} gives
\begin{align}
\ln X^{(i)}(t)
&=\ln X^{(i)}(\tau_i)+\ell(\tau_{i},t,u^{(i)})\nonumber\\
&=\ln X^{(i)}(\tau_i)-\ln {G}^{(i)}(t)+\ln\left[\int_{\tau_i}^{t}{G}^{(i)}(s)S^{i,3}(s)\varrho_{i}(s,u^{(i)})ds+1\right]\label{+42},
\end{align}
where
\begin{equation*}
\begin{cases}
{G}^{(i)}(t)=\exp\Bigg\{-{\int_{\tau_i}^{t}}S^{i,1}(s)dB_s-\int_{\tau_i}^{t}
S^{i,2}(s)dA_s+\int_{\tau_i}^{t}\left(
\ln (S^{i,2}(s)+1)-S^{i,2}(s)\right)d\mathbb{H}^k(s)\\
\qquad \qquad \qquad\mbox{}+{\int_{\tau_i}^{t}}\left[-S^{i,0}(s)+u^{(i)}(s)-\gamma^{(i+1)}(s)S^{i,2}(s)+\frac{1}{2}|S^{i,1}(s)|^2+\gamma^{(i+1)}(s)|S^{i,2}(s)|^2\right]ds\Bigg\},\\
\varrho_{i}(t,u^{(i)})=\exp\left\{{\int_{\tau_i}^{t}}S^{i,0}(s)-u^{(i)}(s)+\gamma^{(i+1)}(s)S^{i,2}(s)+S^{i,3}(s)ds\right\}.
\end{cases}
\end{equation*}
Next, we define a recursive utility process $\mathcal{P}(t)$ governed by the following MMFBSDE:
\begin{equation}\label{SDE5}
\begin{cases}
d\mathcal{P}^{(k)}(t)=-\Bigg[\nu_{k,0}(t)\mathbb{E}[\mathcal{P}^{(k)}(t) \Big|\mathfrak{F}_{\tau_k}]+\nu_{k,1}(t)\mathcal{Q}^{(k)}(t)+\nu_{k,2}(t)\Upsilon^{\mathfrak{F}}(t)\mathcal{R}^{(k)}(t)+\ln |X(t)u(t)|\Bigg]dt\\
\qquad\qquad \quad \mbox{} +\mathcal{Q}^{(k)}(t)dB_t+\mathcal{R}^{(k)}(t)dA^{(k+1)}_t,\quad t\in[\tau_k,\tau_{k+1}), \quad k=0,1,2,\\
\mathcal{P}^{(k)}(\tau_k)=\mathcal{P}^{(k-1)}(\tau_k), \quad k=1,2, \quad \mathcal{P}^{(2)}(T)= \ln X(T)
\end{cases}
\end{equation}
in the unknown process
$$
(\mathcal{P},\mathcal{Q},\mathcal{R})=\sum\limits_{k=0}^{1}(\mathcal{P}^{(k)},\mathcal{Q}^{(k)},\mathcal{R}^{(k)})(t)\mathbb{I}_{t\in [\tau_k,\tau_{k+1})}+(\mathcal{P}^{(2)},\mathcal{Q}^{(2)},\mathcal{R}^{(2)})(t)\mathbb{I}_{t\in [\tau_2,T]}.
$$
Here $\nu_{k,j}(\cdot)\;(k,j=0,1,2)$ are given deterministic and continuous functions on $[0,+\infty)$ satisfying $\nu_{k,0}(t)\geq0$, $\nu_{2,1}(t)>-1$ and $\nu_{2,2}(t)=0$. It follows from Theorem \ref{MMFBSDE} that \eqref{SDE5} has a unique solution $(\mathcal{P},\mathcal{Q},\mathcal{R})\in L_{T}^{\mathfrak{F}}\times H_{T}^{\mathfrak{F}}\times \bigcap \limits_{i=1}^2H_{T}^{\mathfrak{F},A^i}$.  Some related works concerned with the recursive utility process, we refer the reader to \cite{Duffie1992stochastic, Kraft2017Optimal, Li2020linear}.

Thus, the optimal investment problem can be described as follows.
\begin{prob}\label{exam pro}
Find the optimal investment strategy (optimal control) $\widehat{u}(t)=(\widehat{u}^0(t),\widehat{u}^1(t),\widehat{u}^2(t))\in \mathcal{U}^{ad}$ such that
$$
J(x_0,\mathfrak{L}(x_0),\widehat{u})=\sup \limits_{{u}\in \mathcal{U}^{ad}}J(x_0,\mathfrak{L}(x_0),u),
$$
where $J(x_0,\mathfrak{L}(x_0),u)=\mathbb{E}[\mathcal{P}(0)\Big|\mathfrak{F}_0]=\mathbb{E}[\mathcal{P}(0)]$.
\end{prob}

In order to solve Problem \ref{exam pro}, we first give the following proposition, which can be obtained from It\^{o}'s formula immediately.
\begin{prop}\label{+12}
The recursive utility can be rewritten as follows:
\begin{equation}\label{+11}
J(x_0,\mathfrak{L}(x_0),u)=\mathbb{E}[\mathcal{P}(0)]=\mathbb{E}\left[\int_0^TY(t)\ln |X(t)u(t)|dt+Y(T)\ln |X(T)|\right],
\end{equation}
where $Y(t)=(Y^{(0)}(t),Y^{(1)}(t),Y^{(2)}(t))$, and $Y^{(k)}(t)$ ($k=0,1,2$) are given by
\begin{equation}\label{e5-1}
\begin{cases}
dY^{(k)}(t)=\nu_{k,0}(t)\mathbb{E}[Y^{(k)}(t) \Big|\mathfrak{F}_{\tau_k}]dt+\nu_{k,1}(t)Y^{(k)}(t)dB_t+\nu_{k,2}(t)Y^{(k)}(t)dA^{(k+1)}_t,\quad t\in(\tau_k,\tau_{k+1}],\\
Y^{(k)}(\tau_k)=Y^{(k-1)}(\tau_k),\quad Y^{(-1)}(0)=1.
\end{cases}
\end{equation}
\end{prop}

\begin{remark}
For $k=0,1,2$, one can easily check that condition \eqref{++1} holds and by It\^{o}'s formula that
$$\mathbb{E}[Y^{(k)}(t) \Big|\mathfrak{F}_{\tau_k}]=Y^{(k)}(\tau_k)e^{\int_{\tau_k}^{t}\nu_{k,0}(s)ds}$$
and
\begin{equation}\label{+2}
Y^{(k)}(t)=\frac{Y^{(k)}(\tau_k)}{\varrho_{3,i}(t)}+Y^{(k)}(\tau_k)\int_{\tau_k}^{t}\varrho_{3,i}(s)e^{\int_{\tau_k}^{s}\nu_{k,0}(\varsigma)d\varsigma}ds>0 \quad a.s.,
\end{equation}
where
\begin{align*}
\varrho_{3,i}(t)=\exp\Big\{&-{\int_{\tau_k}^{t}}\nu^{k,1}(s)dB_s-\int_{\tau_k}^{t}\left(
\ln (\nu^{k,2}(s)+1)\right)dA_s\\
&\mbox{}+{\int_{\tau_k}^{t}}\left[\nu^{k,2}(s)\gamma^{(k+1)}(s)-\frac{1}{2}\left[\nu^{k,1}(s)\right]^2-\ln (\nu^{k,2}(s)+1)\gamma^{(k+1)}(s)\right]ds\Big\}.
\end{align*}
Moreover, if all coefficients in \eqref{SDE4} and \eqref{+11} are constants, then Proposition \ref{+12} indicates that Problem \ref{exam pro} reduces to Problem \ref{+8}.
\end{remark}
Then by Theorem \ref{multi}, we introduce the following subproblem.
\begin{subprob}\label{spb1}
Find an optimal investment strategy $\widehat{u}^{(2)}(\cdot)\in \mathcal{U}^{ad}_2$ such that
\begin{align*}
J^{(2)}(X^{(1)}(\tau_2),\mathfrak{L}^1(X^{(1)}(\tau_2)),\widetilde{u}^{(2)})&=\mathop{\esssup}\limits_{{u}^{(2)}\in \mathcal{U}^{ad}_2}J(X^{(1)}(\tau_2),\mathfrak{L}^1(X^{(1)}(\tau_2)),u^{(2)})\\
&=\mathop{\esssup}\limits_{{u^{(2)}}\in \mathcal{U}^{ad}_2}\mathbb{E}\left[\int_{\tau_2}^TY(t)\ln |X^{(2)}(t)u^{(2)}(t)|dt+Y(T)\ln |X^{(2)}(T)|\Big|\mathfrak{F}_{\tau_2}\right],
\end{align*}
where $X^{(2)}(T)=X(T)$.
\end{subprob}

Obviously, the Hamiltonian functional to Subroblem \ref{spb1} reads
$$
H^{(2)}=Y\ln |X^{(2)}u^{(2)}|+\left[X^{(2)}(S^{2,0}-u^{(2)})+\overline{X}^{(2)}S^{2,3}\right]p^{(2)}+S^{2,1}X^{(2)}q^{(2)},
$$
where $(p^{(2)},q^{(2)},r^{(2)})$ is given by
\begin{equation}\label{+7}
\begin{cases}
dp^{(2)}(t)=-[\frac{Y(t)}{X^{(2)}(t)}+(S^{2,0}(t)-u^{(2)}(t))p^{(2)}(t)+S^{2,3}(t)\overline{p}^{(2)}(t)+S^{2,1}(t)q^{(2)}(t)]dt+q^{(2)}(t)dB_t,\quad t\in[\tau_2,T],\\
p^{(2)}(T)=\frac{Y(T)}{X(T)}.
\end{cases}
\end{equation}
It follows from Theorems \ref{sufficient} and \ref{thm nece} that
$$
\mathbb{E}\left[\frac{Y(t)}{\widetilde{u}^{(2)}(t)}-\widetilde{X}^{(2)}(t)\widetilde{p}^{(2)}(t)\Big|\mathfrak{F}_{\tau_2}\right]=0,
$$
where $\widetilde{X}^{(2)}(t)$ and $\widetilde{p}^{(2)}(t)$ are respectively the solutions to \eqref{SDE4} and \eqref{+7} under the optimal investment strategy
$$
\widetilde{u}^{(2)}(t)=\frac{\mathbb{E}\left[Y(t)\Big|\mathfrak{F}_{\tau_2}\right]}{\mathbb{E}\left[\widetilde{X}^{(2)}(t)\widetilde{p}^{(2)}(t)\Big|\mathfrak{F}_{\tau_2}\right]},\quad t\in[\tau_2,T].
$$

On the other hand, by It\^{o}'s formula, we have
$$
\widetilde{X}^{(2)}(t)\widetilde{p}^{(2)}(t)=\mathbb{E}\left[\widetilde{X}^{(2)}(T)\widetilde{p}^{(2)}(T)+\int_t^TY(s)ds\Big|\mathfrak{F}_t\right]=\mathbb{E}\left[Y(T)+\int_t^TY(s)ds\Big|\mathfrak{F}_t\right]
$$
and so
\begin{equation*}
\begin{cases}
\widetilde{u}^{(2)}(t)=\frac{\mathbb{E}\left[Y(t)\Big|\mathfrak{F}_{\tau_2}\right]}{\mathbb{E}\left[\mathbb{E}\left[Y(T)+{\int_t^T}Y(s)ds\Big|\mathfrak{F}_t\right]\Big|\mathfrak{F}_{\tau_2}\right]}
=\frac{\mathbb{E}\left[Y(t)\Big|\mathfrak{F}_{\tau_2}\right]}{\mathbb{E}\left[Y(T)+{\int_t^T}Y(s)ds\Big|\mathfrak{F}_{\tau_2}\right]},\quad t\in[\tau_2,T]\\
J^{(2)}(X^{(1)}(\tau_2),\mathfrak{L}^{1}(X^{(1)}(\tau_2)),\widetilde{u}^{(2)})=\mathbb{E}\left[{\int_{\tau_2}^T}Y(t)[\ln |\widetilde{u}^{(2)}(t)|+\ell(\tau_{2},t,\widetilde{u}^{(2)})]dt+Y(T)\ell(\tau_{2},T,\widetilde{u}^{(2)})\Big|\mathfrak{F}_{\tau_2}\right]\\
\qquad\qquad \qquad\qquad \quad\; \mbox{}+\mathbb{E}\left[Y(T)+{\int_{\tau_2}^T}Y(s)ds\Big|\mathfrak{F}_{\tau_2}\right]\ln X^{(1)}(\tau_2).
\end{cases}
\end{equation*}

Similarly, $\widetilde{u}^{(1)}(t)$, $\widetilde{u}^{(0)}(t)$ and related utility can be given as follows:
\begin{equation*}
\begin{cases}
\widetilde{u}^{(1)}(t)=\frac{\mathbb{E}\left[Y(t)\Big|\mathfrak{F}_{\tau_1}\right]}{\mathbb{E}\left[Y(T)+{\int_{t}^T}Y(s)ds\Big|\mathfrak{F}_{\tau_1}\right]},\quad t\in[\tau_1,\tau_2),\\
\widetilde{u}^{(0)}(t)=\frac{\mathbb{E}\left[Y(t)\Big|\mathfrak{F}_{\tau_0}\right]}{\mathbb{E}\left[Y(T)+{\int_{t}^T}Y(s)ds\Big|\mathfrak{F}_{\tau_0}\right]},\quad t\in[0,\tau_1),\\
J^{(1)}(X^{(0)}(\tau_1),\mathfrak{L}^{0}(X^{(0)}(\tau_1)),\widetilde{u}^{(1)})={\sum \limits_{i=1}^{2}}\,\mathbb{E}\left[{\int_{\tau_i}^{\tau_{i+1}}}Y(t)[\ln |\widetilde{u}^{(i)}(t)|+\ell(\tau_{i},t,\widetilde{u}^{(i)})]dt+Y(T)\ell(\tau_{i},\tau_{i+1},\widetilde{u}^{(i)})\Big|\mathfrak{F}_{\tau_1}\right]\\
\qquad\qquad\qquad \qquad\quad\; \mbox{} +\mathbb{E}\left[Y(T)+{\int_{\tau_1}^T}Y(s)ds\Big|\mathfrak{F}_{\tau_1}\right]\ln X^{(0)}(\tau_1),\\
J^{(0)}(x_0,\mathfrak{L}(x_0),\widetilde{u}^{(0)})={\sum \limits_{i=0}^{2}}\,\mathbb{E}\left[{\int_{\tau_i}^{\tau_{i+1}}}Y(t)[\ln |\widetilde{u}^{(i)}(t)|+\ell(\tau_{i},t,\widetilde{u}^{(i)})]dt+Y(T)\ell(\tau_{i},\tau_{i+1},\widetilde{u}^{(i)})\right]\\
\qquad\qquad\qquad\quad \mbox{} +\mathbb{E}\left[Y(T)+{\int_{0}^T}Y(s)ds\right]\ln x_0.
\end{cases}
\end{equation*}

Finally, by Theorem \ref{multi}, $\widetilde{u}=(\widetilde{u}^{(0)},\widetilde{u}^{(1)},\widetilde{u}^{(2)})$ is indeed the global optimal investment strategy. Combining \eqref{+2}, we can see that
$$
\frac{\mathbb{E}\left[Y^{(k)}(t)\Big|\mathfrak{F}_{\tau_k}\right]}{\mathbb{E}\left[Y(T)+{\int_{t}^T}Y^{(k)}(s)ds\Big|\mathfrak{F}_{\tau_k}\right]}
=\frac{1}{e^{\int_{t}^{T}\nu_{k,0}(s)ds}+\int_t^Te^{\int_{t}^{s}\nu_{k,0}(\varsigma)d\varsigma}ds}
$$
is the unique feedback optimal control, which is non-zero and bounded. Therefore, we can organize the above results as the following theorem, which is unexpected but remarkably interesting.

\begin{thm}\label{+10}
The optimal investment strategy of Problem \ref{exam pro} is given by
\begin{equation}\label{+4}
\widehat{u}(t)=\frac{\mathbb{I}_{0\leq t<\tau_1}}{e^{\int_{t}^{T}\nu_{0,0}(s)ds}+\int_t^Te^{\int_{t}^{s}\nu_{0,0}(\varsigma)d\varsigma}ds}
+\frac{\mathbb{I}_{\tau_1\leq t<\tau_2}}{e^{\int_{t}^{T}\nu_{1,0}(s)ds}+\int_t^Te^{\int_{t}^{s}\nu_{1,0}(\varsigma)d\varsigma}ds}
+\frac{\mathbb{I}_{\tau_2\leq t\leq T}}{e^{\int_{t}^{T}\nu_{2,0}(s)ds}+\int_t^Te^{\int_{t}^{s}\nu_{2,0}(\varsigma)d\varsigma}ds}.
\end{equation}
\end{thm}

\section{Stability}
This section studies the mean square exponential stability and the almost sure exponential stability for the solution to MMFSDEs under optimal feedback control, respectively. To this end, we suppose that the coefficients in \eqref{SDE2} are all progressively measurable functions from $[0,+\infty)\times \Omega\times \mathbb{R}\times\mathcal{P}_2(\mathbb{R})\times \mathbb{R}\times\mathcal{P}_2(\mathbb{R})$ to $\mathbb{R}$, and that \eqref{SDE2} admits a unique solution $X(\cdot)\in L_{+\infty}^{\mathfrak{F}}$.
\subsection{Mean Square Exponential Stability}
Let us recall the following mean square exponential stability and almost sure exponential stability.
\begin{defn}
\begin{enumerate}[($\romannumeral1$)]
\item The state $X(t)$ is said to be mean square exponentially stable if there exists a pair of positive constants $M_0$ and $\alpha_0$ such
that $$\mathbb{E}\widetilde{}\left[(X(t))^2\right]\leq x_0^2\cdot M_0e^{-\alpha_0t}.$$
for all $t\geq0$ and $x_0\in \mathbb{R}$. Namely, $$\limsup\limits_{t\rightarrow{+\infty}}\frac{\ln \mathbb{E}\left[|X(t)|^2\right]}{t}<0$$ for all $x_0\in \mathbb{R}$.

\item The state $X(t)$ is said to almost surely exponentially stable if $$P\left\{\limsup\limits_{t\rightarrow{+\infty}}\frac{\ln|X(t)|}{t}<0\right\}=1$$
for all $x_0\in \mathbb{R}$.
\end{enumerate}
\end{defn}
Based on \eqref{SDE2}, consider the following system of MMFSDE with initial value $x_0\in\mathbb{R}$ and optimal feedback control $\hat{u}^k(t)=\phi^k(t,\hat{X}^k(t),M(\hat{X}^k)(t))\;(k\in\mathcal{I}_0)$:
\begin{equation*}
\begin{cases}
d\widehat{X}^{k}(t)=b_{\gamma}^{k}(t,\widehat{X}^{k}(t),M(\widehat{X}^{k}(t)),\widehat{u}^{k}(t),N(\widehat{u}^{k}(t)))dt+\sigma^{k}(t,\widehat{X}^{k}(t),M(\widehat{X}^{k}(t)),\widehat{u}^{k}(t),N(\widehat{u}^{k}(t)))dB_t\\
\qquad\qquad\; +h^{k}(t,\widehat{X}^{k}(t),M(\widehat{X}^{k}(t)),\widehat{u}^{k}(t),N(\widehat{u}^{k}(t)))dA^{k+1}_t, \quad t\in(\tau_k,\tau_{k+1}],\\
\widehat{X}^{k}(\tau_k)=\widehat{X}^{k-1}(\tau_k), \;\widehat{X}^{-1}(\tau_0)=x_0,
\end{cases}
\end{equation*}
where the $(n+1)$-th equation and the optimal feedback control remain valid on $t\in[T,+\infty)$. We have the following theorem for the mean square exponential stability.
\begin{thm}\label{stable}
Let
\begin{align*}
\;V_k(\widehat{X}^{k}(t),M(\widehat{X}^{k}(t)))=&2b_{\gamma}^{k}(t,\widehat{X}^{k}(t),M(\widehat{X}^{k}(t)),\widehat{u}^{k}(t),N(\widehat{u}^{k}(t)))\widehat{X}^{k}(t)\nonumber\\
&\mbox{} +|\sigma^{k}(t,\widehat{X}^{k}(t),M(\widehat{X}^{k}(t)),\widehat{u}^{k}(t),N(\widehat{u}^{k}(t)))|^2\nonumber\\
&\mbox{}+\gamma^{(i+1)}(t)|h^{k}(t,\widehat{X}^{k}(t),M(\widehat{X}^{k}(t)),\widehat{u}^{k}(t),N(\widehat{u}^{k}(t)))|^2.
\end{align*}
If there exist constants $C_0,C_1,\cdots,C_n$ such that $\sum\limits_{i=0}^{k}C_i>0$ and
\begin{equation}\label{++2}
\;\mathbb{E}\left[V_k(\widehat{X}^{k}(t),M(\widehat{X}^{k}(t)))\Big|\mathfrak{F}_{\tau_k}\right]
\leq -{C}_k\mathbb{E}\left[(\widehat{X}^{k}(t))^{2}\Big|\mathfrak{F}_{\tau_k}\right] \quad\mbox{a.s.}
\end{equation}
for all $k\in\mathcal{I}_0$ and $t\geq0$, then the state $\widehat{X}^{k}(t)$ is mean square exponentially stable.
\end{thm}
\begin{proof}
Applying It\^{o}'s formula to $e^{{C}_k(t-\tau_k)}(\widehat{X}^k(t))^{2}$ and taking  expectation, one can check that for any $t\in[\tau_k,\tau_{k+1}]$ or $t\in[\tau_n,+\infty)$,
\begin{align*}
&\quad\;\mathbb{E}\left[e^{{C}_k(t-T)}(\widehat{X}^k(t))^{2}\right]\\
&\leq\mathbb{E}\left[e^{{C}_k(t-\tau_k)}(\widehat{X}^k(t))^{2}\right]\\
&= e^0\mathbb{E}\left[(\widehat{X}^k(\tau_k))^{2}\right]
+\mathbb{E}\left[\int_{\tau_k}^{t}{C}_ke^{{C}_k(t-\tau_k)}(\widehat{X}^k(s))^{2}+e^{{C}_k(t-\tau_k)}V_k(\widehat{X}^{k}(t),M(\widehat{X}^{k}(t)))ds\right]\\
&=\mathbb{E}\left[(\widehat{X}^k(\tau_k))^{2}\right]
+\mathbb{E}\left[\int_{\tau_k}^{t}{C}_ke^{{C}_k(t-\tau_k)}(\widehat{X}^k(s))^{2}ds\right] +\mathbb{E}\left[\int_{\tau_k}^{t}e^{{C}_k(t-\tau_k)}\mathbb{E}\left[V_k(\widehat{X}^{k}(t),M(\widehat{X}^{k}(t)))\Big|\mathfrak{F}_{\tau_k}\right]ds\right]\\
&\leq \mathbb{E}\left[(\widehat{X}^k(\tau_k))^{2}\right]
+\mathbb{E}\left[\int_{\tau_k}^{t}{C}_ke^{{C}_k(t-\tau_k)}(\widehat{X}^k(s))^{2}ds\right] +\mathbb{E}\left[\int_{\tau_k}^{t}e^{{C}_k(t-\tau_k)}\mathbb{E}\left[-C_k(\widehat{X}^k(\tau_k))^{2}\Big|\mathfrak{F}_{\tau_k}\right]ds\right]\\
&=\mathbb{E}\left[(\widehat{X}^k(\tau_k))^{2}\right]
+\mathbb{E}\left[\int_{\tau_k}^{t}{C}_ke^{{C}_k(t-\tau_k)}(\widehat{X}^k(s))^{2}ds\right] -\mathbb{E}\left[\int_{\tau_k}^{t}e^{{C}_k(t-\tau_k)}C_k(\widehat{X}^k(\tau_k))^{2}ds\right]\\
&=\mathbb{E}\left[(\widehat{X}^k(\tau_k))^{2}\right],
\end{align*}
which implies
\begin{equation*}
\mathbb{E}\left[(\widehat{X}^k(t))^{2}\right]
\leq e^{{C}_k(T-t)}\mathbb{E}\left[(\widehat{X}^k(\tau_k))^{2}\right]\leq e^{({C}_k+C_{k-1})(T-t)}\mathbb{E}\left[(\widehat{X}^{k-1}(\tau_{k-1}))^{2}\right].
\end{equation*}
Thus, by the backward induction, we can obtain
\begin{equation*}
\mathbb{E}\left[(\widehat{X}^k(t))^{2}\right]
\leq x_0^2\cdot e^{(T-t)\sum\limits_{i=0}^{k}C_i},
\end{equation*}
which ends the proof with $M_0=e^{T\sum\limits_{i=0}^{k} C_i}$ and $\alpha_0=\sum\limits_{i=0}^{k}C_i$.
\end{proof}
The following proposition will be useful to verify condition \eqref{++2}.
\begin{prop}\label{+5}
For any given $t\in[\tau_k,\tau_{k+1}]$  or $t\in[\tau_n,+\infty)$, one has
$$\mathbb{E}\left[M(\widehat{X}^k(t))(\widehat{X}^k(t))\Big|\mathfrak{F}_{\tau_k}\right]\leq\mathbb{E}\left[(\widehat{X}^k(t))^{2}\Big|\mathfrak{F}_{\tau_k}\right] \quad\mbox{a.s.}.$$
\end{prop}
\begin{proof}
It follows from Cauchy's inequality and the condition $\mathcal{M}^k_{\tau_k}=\mathfrak{F}_{\tau_k}$ that
\begin{align*}
\mathbb{E}\left[M(\widehat{X}^k(t))(\widehat{X}^k(t))\Big|\mathfrak{F}_{\tau_k}\right]=&\mathbb{E}\left[\mathbb{E}\left[\widehat{X}^k(t)\Big|\mathcal{M}^k_t\right]
\cdot(\widehat{X}^k(t))\Big|\mathfrak{F}_{\tau_k}\right]\\
\leq&\mathbb{E}^{\frac{1}{2}}\left[\mathbb{E}^{2}\left[\widehat{X}^k(t)\Big|\mathcal{M}^k_t\right]\Big|\mathfrak{F}_{\tau_k}\right]
\cdot\mathbb{E}^{\frac{1}{2}}\left[(\widehat{X}^k(t))^{2}\Big|\mathfrak{F}_{\tau_k}\right]\\
\leq&\mathbb{E}^{\frac{1}{2}}\left[\mathbb{E}\left[(\widehat{X}^k(t))^{2}\Big|\mathcal{M}^k_t\right]\Big|\mathfrak{F}_{\tau_k}\right]
\cdot\mathbb{E}^{\frac{1}{2}}\left[(\widehat{X}^k(t))^{2}\Big|\mathfrak{F}_{\tau_k}\right]\\
=&\mathbb{E}^{\frac{1}{2}}\left[(\widehat{X}^k(t))^{2}\Big|\mathfrak{F}_{\tau_k}\right]\cdot\mathbb{E}^{\frac{1}{2}}\left[(\widehat{X}^k(t))^{2}\Big|\mathfrak{F}_{\tau_k}\right]\\
=&\mathbb{E}\left[(\widehat{X}^k(t))^{2}\Big|\mathfrak{F}_{\tau_k}\right]\quad\mbox{a.s.}
\end{align*}
and so the desired result is obtained.
\end{proof}

Next, concerning on the optimal controlled pair $(\widehat{X}(\cdot),\widehat{u}(\cdot))$ governed by \eqref{SDE4}, we further analyse the the mean square exponential stability of the state $\widehat{X}(t)$. Noticing that all the coefficients are bounded on $[0,+\infty)$, we denote
$$
\widehat{C}_i=\sup\limits_{t\in[0,+\infty)}\left[2S^{i,0}(t)-2\widehat{u}^{(i)}(t)+2\gamma^{(i+1)}(t)S^{i,2}(t)+2S^{i,3}(t)+|S^{i,1}(t)|^2+\gamma^{(i+1)}(t)|S^{i,2}(t)|^2\right].
$$
Furthermore, we need the following assumption.
\begin{assumption}\label{+18}
Suppose that $$\max\{\widehat{C}_1,\widehat{C}_1+\widehat{C}_2,\widehat{C}_1+\widehat{C}_2+\widehat{C}_3\}<0.$$
\end{assumption}
Then the result of mean square exponential stability for $\widehat{X}(t)$ can be specified as follows.
\begin{thm}\label{+19}
Under Assumption \ref{+18}, the state $\widehat{X}(t)$ is mean square exponentially stable.
\end{thm}
\begin{proof}
It follows from Proposition \ref{+5} and Theorem \ref{stable}  immediately.
\end{proof}
\begin{remark}
Theorem \ref{+19} implies that $\mathbb{E}\left[\int_0^{+\infty}e^{-\delta t}|X(t)|^2dt\right]<+\infty$ for all $\delta>\widehat{C}_1+\widehat{C}_2+\widehat{C}_3$.
\end{remark}
\subsection{Almost Sure Exponential Stability}
In general, it is hard to obtain the almost sure exponential stability for the solutions to nonlinear system \eqref{SDE1}. Thus this subsection will discuss the almost sure exponential stability for the solution to the linear MMFSDE \eqref{SDE4} under optimal feedback control. To this end, from \eqref{+42}, we focus on the state of the third stage ($i=2$) with
\begin{align}
\ln \widehat{X}^{(2)}(t)
&=\ln \widehat{X}^{(2)}(\tau_2)-\ln {\widehat{G}}^{(2)}(t)+\ln\left[\int_{\tau_2}^{t}{\widehat{G}}^{(i)}(s)S^{2,3}(s)\varrho_{i}(s,\widehat{u}^{(2)})ds+1\right]\nonumber\\
&=\ln \widehat{X}^{(2)}(\tau_2)-\ln {\widehat{G}}^{(2)}(t)\nonumber\\
&\quad\mbox{}+\ln\left[\int_{\tau_2}^{t}S^{2,3}(s)
\exp\left\{-{\int_{\tau_2}^{t}}S^{2,1}(s)dB_s+{\int_{\tau_2}^{s}}\left[\frac{1}{2}|S^{2,1}(\delta)|^2+S^{2,3}(\delta)\right]d\delta\right\}ds+1\right]\label{+16},
\end{align}
where
\begin{equation*}
\begin{cases}
{\widehat{G}}^{(2)}(t)=\exp\left\{-{\int_{\tau_2}^{t}}S^{2,1}(s)dB_s+{\int_{\tau_2}^{t}}\left[-S^{2,0}(s)+\widehat{u}^{(2)}(s)+\frac{1}{2}|S^{2,1}(s)|^2\right]ds\right\},\\
\varrho_{2}(t,\widehat{u}^{(2)})=\exp\left\{{\int_{\tau_2}^{t}}S^{2,0}(s)-\widehat{u}^{(2)}(s)+S^{2,3}(s)ds\right\}.
\end{cases}
\end{equation*}
Suppose that the optimal feedback control $\widehat{u}(t)$ given by \eqref{+4} remains valid on $t\in[T,+\infty)$, i.e.,
\begin{equation*}
\widehat{u}(t)=\frac{\mathbb{I}_{0\leq t<\tau_1}}{e^{\int_{t}^{T}\nu_{0,0}(s)ds}+\int_t^Te^{\int_{t}^{s}\nu_{0,0}(\varsigma)d\varsigma}ds}
+\frac{\mathbb{I}_{\tau_1\leq t<\tau_2}}{e^{\int_{t}^{T}\nu_{1,0}(s)ds}+\int_t^Te^{\int_{t}^{s}\nu_{1,0}(\varsigma)d\varsigma}ds}
+\frac{\mathbb{I}_{\tau_2\leq t\leq +\infty}}{e^{\int_{t}^{T}\nu_{2,0}(s)ds}+\int_t^Te^{\int_{t}^{s}\nu_{2,0}(\varsigma)d\varsigma}ds},
\end{equation*}
and the $(n+1)$-th equation in \eqref{SDE4} still holds for $t\in[T,+\infty)$. Then, one can easily deduce that \eqref{SDE4} has a unique solution ${\widehat{X}}(\cdot)\in{L_{+\infty}^{\mathfrak{F},\delta}}$ for some $\delta>0$ under the optimal feedback control $\widehat{u}(t)$. Moreover, we need the following assumption for the related coefficients.
\begin{assumption}\label{+17}
Suppose that the following assumptions hold:
\begin{enumerate}[($\romannumeral1$)]
\item The limit $\lim\limits_{t\rightarrow+\infty}S^{2,1}(t)$ exists;

\item $\lim\limits_{t\rightarrow+\infty}\left[S^{2,0}(t)-\widehat{u}^{(2)}(t)-\frac{1}{2}|S^{2,1}(t)|^2\right]<0$;

\item  $2S^{2,3}(s)\leq|S^{2,1}(s)|^2$, $\forall t\geq0$.
\end{enumerate}
\end{assumption}
Then we have the following result.
\begin{thm}\label{+22}
Under Assumption \ref{+17}, the state $\widehat{X}(t)$ is  almost surely exponentially stable.
\end{thm}
\begin{proof}
The proof is divided into two steps.

$\mathbf{Step 1}$ The quadratic variation of $\int_{\tau_2}^{t}S^{2,1}(s)dB_s$ reads $\int_{\tau_2}^{t}|S^{2,1}(s)|^2ds$. Since
$$
\lim\limits_{t\rightarrow+\infty}\frac{\int_{\tau_2}^{t}|S^{2,1}(s)|^2ds}{t}=\lim\limits_{t\rightarrow+\infty}|S^{2,1}(t)|^2<+\infty,\quad\mbox{a.s.,}
$$
it follows from Theorem 1.3.4 in \cite{MAO2008STOCHASTIC} that $$\lim\limits_{t\rightarrow+\infty}\frac{\int_{\tau_2}^{t}S^{2,1}(s)dB_s}{t}=0,\quad\mbox{a.s..}$$
Then combining
$$
\lim\limits_{t\rightarrow+\infty}\frac{{\int_{\tau_2}^{t}}\left[S^{2,0}(s)-\widehat{u}^{(2)}(s)-\frac{1}{2}|S^{2,1}(s)|^2\right]ds}{t}=
\lim\limits_{t\rightarrow+\infty}\left[S^{2,0}(t)-\widehat{u}^{(2)}(t)-\frac{1}{2}|S^{2,1}(t)|^2\right]<0\quad\mbox{a.s.,}
$$
we obtain that
\begin{equation}\label{+15}
\lim\limits_{t\rightarrow+\infty}-\ln {\widehat{G}}^{(2)}(t)<0, \quad\mbox{a.s..}
\end{equation}
$\mathbf{Step 2}$ Noticing that $\exp\left\{-2{\int_{\tau_2}^{t}}S^{2,1}(s)dB_s+2{\int_{\tau_2}^{t}}|S^{2,1}(s)|^2ds\right\}$ is an exponential martingale, one has
\begin{align*}
0&\leq\lim\limits_{t\rightarrow+\infty}\mathbb{E}\left[\frac{\ln^2\left[\int_{\tau_2}^{t}S^{2,3}(s)
\exp\left\{-{\int_{\tau_2}^{t}}S^{2,1}(s)dB_s+{\int_{\tau_2}^{t}}\left[\frac{1}{2}|S^{2,1}(s)|^2+S^{2,3}(s)\right]ds\right\}ds+1\right]}{t^2}\right]\\
&\leq\lim\limits_{t\rightarrow+\infty}\mathbb{E}\left[\frac{\left[\int_{\tau_2}^{t}S^{2,3}(s)
\exp\left\{-{\int_{\tau_2}^{t}}S^{2,1}(s)dB_s+{\int_{\tau_2}^{t}}\left[\frac{1}{2}|S^{2,1}(s)|^2+S^{2,3}(s)\right]ds\right\}ds\right]^2}{t^2}\right]\\
&\leq\lim\limits_{t\rightarrow+\infty}\mathbb{E}\left[\frac{\left[\int_{\tau_2}^{t}|S^{2,3}(s)|^2
\exp\left\{-{\int_{\tau_2}^{t}}2S^{2,1}(s)dB_s+{\int_{\tau_2}^{t}}\left[|S^{2,1}(s)|^2+2S^{2,3}(s)\right]ds\right\}ds\right]}{t^2}\right]\\
&\leq\lim\limits_{t\rightarrow+\infty}\mathbb{E}\left[\frac{\left[\int_{0}^{t}|S^{2,3}(s)|^2
\exp\left\{-{\int_{\tau_2}^{t}}2S^{2,1}(s)dB_s+2{\int_{\tau_2}^{t}}|S^{2,1}(s)|^2ds\right\}ds\right]}{t^2}\right]\\
&=\lim\limits_{t\rightarrow+\infty}\frac{\int_{0}^{t}|S^{2,3}(s)|^2ds}{t^2}\\
&=\lim\limits_{t\rightarrow+\infty}\frac{|S^{2,3}(t)|^2}{2t}\\
&=0,
\end{align*}
where we have used Cauchy's inequality. This implies that
\begin{equation}\label{+14}
\lim\limits_{t\rightarrow+\infty}\frac{\ln\left[\int_{\tau_2}^{t}S^{2,3}(s)
\exp\left\{-{\int_{\tau_2}^{t}}S^{2,1}(s)dB_s+{\int_{\tau_2}^{s}}\left[\frac{1}{2}|S^{2,1}(\delta)|^2+S^{2,3}(\delta)\right]d\delta\right\}ds+1\right]}{t}=0,\quad
\mbox{a.s..}
\end{equation}
Combining \eqref{+16}, \eqref{+15} and \eqref{+14}, we have
$$
\lim\limits_{t\rightarrow+\infty}\frac{\ln \widehat{X}^{(2)}(t)}{t}<0, \quad\mbox{a.s..}
$$
Thus, the state $\widehat{X}(t)$ is almost surely exponentially stable.
\end{proof}

As is well known, both the mean square exponential stability and the almost sure exponential stability imply the globally asymptotic stability \cite{deng1997stochastic}. Therefore, combining Theorems \ref{+19} and \ref{+22} we have the following result.
\begin{thm}
Under Assumptions \ref{+18} and \ref{+17},  the state $\widehat{X}(t)$ is globally asymptotically stable.
\end{thm}

\section{Conclusions}
This paper is devoted to the study of stochastic optimal control problems governed by a system of MMFSDEs. By deriving the backward induction formula, the original problem is decomposed into several subproblems. Then both sufficient and necessary maximum principles are given for these subproblems with random coefficients. Next, the existence and uniqueness results are obtained for both MMFSDEs and MMFBSDEs, and the obtained results are applied to solve an optimal investment problem. Finally, under suitable conditions,  the mean square exponential stability and almost sure exponential stability are guaranteed for the solutions to the MMFSDEs under optimal feedback controls.

We would like to mention that our results are still valid for solving the linear-quadratic optimal control problem governed by the MMFSDE. However, many interesting problems should be considered in the future: (i) find some second or higher-order necessary conditions for optimal control problems governed by MMFSDEs; (ii) solve  optimal control problems governed by MMFSDEs with time delays and/or regime switchings.

%\end{CJK}

\end{document}